\definecolor {processblue}{cmyk}{0.96,0,0,0}
\newtheorem{Theorem}{Theorem}[section]
\newtheorem{Proposition}{Proposition}[section]
\newtheorem{Lemma}{Lemma}[section]
\newtheorem{Corollary}{Corollary}[section]
\newtheorem{Definition}{Definition}[section]
\newtheorem{Example}{Example}[section]
\newtheorem{Remark}{Remark}[section]
\newcounter{tmp}
\begin{document}
\title{xxxx}
\date{\today}
 \title{On univoque points for self-similar sets}
\author{Simon Baker\thanks{simonbaker412@gmail.com}}
\author{Karma Dajani\thanks{k.dajani1@uu.nl}}
\author{Kan Jiang\thanks{K.Jiang1@uu.nl}}
\affil{Department of Mathematics, Utrecht University}
\maketitle
\begin{abstract}
Let $K\subseteq\mathbb{R}$ be the unique attractor of an iterated
function system. We consider the case where
$K$ is an interval and study those elements of $K$ with a unique
coding. We prove under mild conditions that the set of points with a
unique coding can be identified with a subshift of finite type. As a
consequence of this, we can show that the set of points with a unique
coding is a graph-directed self-similar set in the sense of Mauldin
and Williams \cite{MW}. The theory of Mauldin and Williams then
provides a method by which we can explicitly calculate the Hausdorff
dimension of this set. Our algorithm can be applied generically, and
our result generalises the work of \cite{DKK}, \cite{K1}, \cite{K2},
and \cite{MK}.

\bigskip
\noindent \textbf{Keywords:} Univoque set, Self-similar sets, Hausdorff dimension
\end{abstract}

\section{Introduction}

Let $\{f_{j}\}_{j=1}^{m}$ be an iterated function system (IFS) of similitudes which are defined on $\mathbb{R}$ by
\[f_j(x)=r_jx+a_j.\]
Where the similarity ratios satisfy $0<r_j<1$ and the translation parameter $a_j\in \mathbb{R}$. It is well known that there exists a unique non-empty compact set $K\subset \mathbb{R}$ such that
\begin{equation}
\label{Hutchinson formula}
K=\bigcup_{j=1}^{m}f_j(K).
\end{equation}
We call $K$ the self-similar set or attractor for the IFS
$\{f_{j}\}_{j=1}^{m}$, see \cite{H} for further details. We refer to the elements of $\{f_{j}(K)\}^{m}_{j=1}$ as
first-level intervals when $K$ is an interval. An IFS is
called homogeneous if all the similarity ratios $r_j$ are  equal.
For any  $x \in K$, there exists a sequence
$(i_n)_{n=1}^{\infty}\in\{1,\ldots,m\}^{\mathbb{N}}$ such that
\[x=\lim_{n\to \infty}f_{i_1}\circ \cdots\circ f_{i_n}(0)=\bigcap_{n=1}^{\infty}f_{i_1}\circ \cdots\circ f_{i_n}(K).\] We call such a sequence a coding of $x$. The attractor $K$ defined by (\ref{Hutchinson formula}) may equivalently be defined to be the set
of points in $\mathbb{R}$ which admit a coding, i.e.,
we can define a surjective projection map between the symbolic space $\{1,\ldots, m\}^{\mathbb{N}}$ and the self-similar set $K$ by
 $$\pi((i_n)_{n=1}^{\infty}):=\lim_{n\to \infty}f_{i_1} \circ\cdots \circ
f_{i_n}(0).$$
An  $x\in K$ may
have many different codings, if $(i_n)_{n=1}^{\infty}$ is unique then we call $x$ a
univoque point. The set of univoque points is called the univoque
set and we denote it by $U_{\{f_j\}^{m}_{j=1}},$ i.e.,
\begin{align*}
U_{\{f_j\}^{m}_{j=1}}:=\Big\{x\in K: &\textrm{ there exists a unique } (i_{n})_{n=1}^{\infty}\in \{1,\ldots,m\}^{\mathbb{N}} \textrm{ satisfying }\\
& x=\lim_{n\to \infty}f_{i_1}\circ \cdots\circ f_{i_n}(0)\Big\}.
\end{align*}
Let $\widetilde{U}_{\{f_j\}^{m}_{j=1}}:=\pi^{-1}(U_{\{f_j\}^{m}_{j=1}}).$
If there is no risk of confusion, we denote ${U}_{\{f_j\}^{m}_{j=1}}$ and $\widetilde{U}_{\{f_j\}^{m}_{j=1}}$ by $U$ and $\widetilde{U}$ respectively. With a little effort, it may be shown that $\pi$ is a homeomorphism between   the set of unique
codings $\widetilde{U}$ and
 the univoque set $U$. In this paper we present a
general algorithm for determining the Hausdorff dimension of
$U$ when $K$ is an interval. Unless stated
otherwise, in what follows we will always assume that our IFS is
such that $K$ is an interval.

 Part of our motivation comes
from the study of $\beta$-expansions. Given $\beta>1$ and
$x\in\left[0,(\lceil \beta \rceil-1)(\beta-1)^{-1}\right]$ there
exists
 a sequence $(a_{n})_{n=1}^{\infty}\in \{0,\ldots, \lceil \beta \rceil-1\}^{\mathbb{N}}$ such that $$x=\sum_{n=1}^{\infty}a_{n}\beta^{-n}.$$
We call such a sequence a $\beta$-expansion of $x$.
 Expansions in non-integer bases were pioneered in the papers of Renyi \cite{R} and Parry \cite{P}.
 For more information, see \cite{EHJ}, \cite{KarmaCor}, \cite{MK} and the references therein.

We can study $\beta$-expansions via the IFS
\[g_j(x)=\dfrac{x+j}{\beta},\,j\in\{0,\ldots,\lceil \beta \rceil-1\}.\]
The self-similar set for this IFS is the interval
$\mathcal{A_\beta}:=\left[0,(\lceil \beta \rceil-1)(\beta-1)^{-1}\right]$. For
$\beta$-expansions, it is clear that any first-level interval
$g_{j}(\mathcal{A_\beta})$ intersects at most two other first-level intervals
simultaneously. For any $M\in \mathbb{N}$,  it is  straightforward to show that
\[g_{i_1}\circ\cdots \circ
g_{i_M}(0)=\sum_{n=1}^{M}i_{n}\beta^{-n}.\]
Therefore, $\lim_{n\to \infty}g_{i_1}\circ g_{i_2} \circ\ldots \circ
g_{i_n}(0)=x$ if and only if $(i_n)_{n=1}^{\infty}$ is a $\beta$-expansion of $x$.  Much work has been done on
the set of points with a unique $\beta$-expansion. Glendinning and Sidorov  classified in \cite{GS}  those
$\beta\in(1,2)$ for which the Hausdorff dimension of the univoque set is positive. However, their approach did not allow them to
calculate the Hausdorff dimension. This result was later generalised
to arbitrary $\beta>1$ in \cite{KLD}. Dar\'{o}czy and K\'{a}tai
\cite{DKK} offered an approach to the problem of calculating the
dimension when $\beta\in(1,2)$, but they could only calculate the
dimension when $\beta$ is a special purely Parry number \cite{P}. The reason why they chose special numbers is that the directed graph they constructed was strongly connected, we however will prove that this is not necessary.
Making use of similar ideas, Kall\'{o}s \cite{K1}, \cite{K2} showed
that for $\beta>2$:
\begin{itemize}
\item[(1)]  If $\beta\in[\lceil \beta \rceil-1,(\lceil \beta
\rceil-1+\sqrt{(\lceil \beta \rceil)^2-2\lceil \beta \rceil+5})]$,
then the Hausdorff dimension of the univoque set is equal to
$(\log(\lceil \beta \rceil-2))(\log\beta)^{-1}$.
\item[(2)] If $\beta\in[(\lceil \beta
\rceil-1+\sqrt{(\lceil \beta \rceil)^2-2\lceil \beta
\rceil+5}),\,\lceil \beta \rceil)$ and a purely Parry number,
Kall\'{o}s can still find the dimensional result.
\end{itemize}
Zou, Lu and Li \cite{ZL} considered the univoque set for
a class of homogeneous self-similar sets with overlaps. Their
motivation was to generalise Glendinning and Sidorov's result
\cite{GS}.
In some cases, they provide an explicit formula for the dimension of the univoque set. What made the work of Zou, Lu and Li different to the work of Glendinning and Sidorov, was that the self-similar sets
they considered were of Lebesgue measure zero.Their approach was similar to Glendinning and Sidorov's, the crucial technique is finding a new characterisation of the univoque set. However, when the similarity ratios change and the attractor becomes an interval, they cannot calculate the dimension of univoque set. Recently, in the setting
of $\beta$-expansions, Kong and Li \cite{DL} generalised Kall\'{o}s'
results, their approach made use of different techniques which were
based on the admissible blocks introduced by Komornik and de Vries
\cite{MK}. They were able to calculate the dimension of the univoque set for $\beta$ within intervals. These intervals cover almost all $\beta$, even some bases for which $\tilde{U}$ is not a subshift of finite type.

In the papers mentioned above, the approaches given always have two
points in common. The first is that their method depends on
finding a symbolic characterisation of the  univoque set via the
greedy algorithm. For general self-similar sets such a
characterisation is not possible. The second point is that in their
setup every first-level interval has at most two adjacent
first-level intervals intersecting it. For general self-similar sets, some first-level intervals
may intersect many first-level intervals simultaneously. As such their methods do not simply translate over and we have to find a new approach.

The goal of this paper is to give a general algorithm for calculating
the Hausdorff dimension of the univoque set when the self-similar
set is an interval. When this algorithm can be implemented it
identifies the univoque set with a subshift of finite type. With this
new symbolic representation, we can use a directed graph to represent
the set $\widetilde{U}$, see for example Chapter 2 \cite{LM}. We then
show that $U$ is a graph-directed self-similar set in
the sense of Mauldin and Williams \cite{MW}. Using the results of
\cite{MW} we can then calculate $\dim_{H}(U)$
explicitly. This algorithm can be implemented in a generic sense
that we will properly formalise later.

The structure of the paper is as follows. In section 2 we describe
the self-similar set via a dynamical system and state Theorem
\ref{MainTheorem} which is our main result. In section 3 we prove
Theorem \ref{MainTheorem} and demonstrate that for most cases, the
hypothesis of Theorem \ref{MainTheorem} is satisfied (Corollary
\ref{Universal coding}). In section 4 we restrict to
$\beta$-expansions and provide an alternative methodology for determining
the subshift of finite type representation of $\widetilde{U}$. In
section 5 we introduce the definition of a graph-directed self-similar
set and illustrate how to calculate the dimension of the univoque
set using this tool. In section 6 we give a worked example. Finally in section 7, we discuss how the approach given can be extended to higher dimension.

After completion of this paper the authors were made aware of the
work of Bundfuss, Kr{\"u}ger and Troubetzkoy \cite{STS}. They were
concerned with iterating maps on a manifold $M$ and the set of $x\in
M$ that were never mapped into some hole. Theorem \ref{MainTheorem}
is essentially a consequence of Proposition 4.1 \cite{STS}. However,
all of our results regarding calculating
$\dim_{H}(U)$ and the identification of the
univoque set with a graph-directed self-similar set are completely
new.

\section{Preliminaries and Main Results}
In this section we describe the elements of our attractor in terms of a dynamical system.
Recall that $K=[a,b]\subseteq\mathbb{R}$ is the attractor of our IFS $\{f_{j}\}_{j=1}^{m}$, i.e.,
\[K=\bigcup_{j=1}^{m}f_j(K).\]
Define $T_{j}(x):=f_{j}^{-1}(x)=(x-a_{j})r_{j}^{-1}$ for each $1\leq j\leq m.$ We denote the concatenation $T_{i_{n}}\circ \ldots \circ T_{i_{1}}(x)$ by $T_{i_1\ldots i_n}(x)$. The following lemma provides an alternative formulation of codings of elements of $K$ in terms of the maps $T_{j}$.
\begin{Lemma}
\label{Dynamical lemma}
Let $x\in K.$ Then $(i_{n})_{n=1}^{\infty}\in\{1,\ldots,m\}^{\mathbb{N}}$ is a coding for $x$ if and only if $T_{i_{1}\ldots i_{n}}(x)\in K$ for all $n\in\mathbb{N}.$
\end{Lemma}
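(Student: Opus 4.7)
The plan is to reduce this lemma to the tautology that $T_{i_1\ldots i_n}(x)\in K$ is the preimage reformulation of $x\in f_{i_1}\circ\cdots\circ f_{i_n}(K)$, and then match the latter condition with the definition of a coding via nested cylinders.

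First I would unwind the composition. Since $T_j=f_j^{-1}$ and $T_{i_1\ldots i_n}=T_{i_n}\circ\cdots\circ T_{i_1}$,
\[
T_{i_1\ldots i_n}(x)=(f_{i_1}\circ\cdots\circ f_{i_n})^{-1}(x).
\]
Each $f_{i_1}\circ\cdots\circ f_{i_n}$ is an invertible affine similitude of $\mathbb{R}$, so $T_{i_1\ldots i_n}(x)\in K$ is equivalent to $x\in f_{i_1}\circ\cdots\circ f_{i_n}(K)$. The lemma therefore reduces to proving that $(i_n)_{n=1}^{\infty}$ is a coding of $x$ if and only if $x$ lies in every cylinder $f_{i_1}\circ\cdots\circ f_{i_n}(K)$, $n\ge 1$.

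The forward direction is immediate: by the definition recalled in the introduction, if $(i_n)$ codes $x$ then $x=\bigcap_{n\ge 1}f_{i_1}\circ\cdots\circ f_{i_n}(K)$, so $x$ lies in each cylinder. For the reverse direction I would verify that the cylinders are nested (using $f_{i_{n+1}}(K)\subseteq K$ from the Hutchinson relation) and that their diameters $|K|\prod_{j=1}^{n}r_{i_j}$ are bounded above by $|K|(\max_{1\le j\le m}r_j)^n$, which tends to $0$ since there are only finitely many ratios, each strictly less than $1$. Hence the intersection of the cylinders is a single point, which by the definition of $\pi$ is $\lim_{n\to\infty}f_{i_1}\circ\cdots\circ f_{i_n}(0)$; any $x$ belonging to every cylinder must equal this limit, so $(i_n)$ is indeed a coding of $x$.

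There is essentially no obstacle: the lemma is a routine translation between the geometric description of codings in terms of nested cylinder sets and their dynamical reformulation via the inverse branches $T_j$, and the only nontrivial ingredient is the standard contraction estimate on the cylinder diameters.
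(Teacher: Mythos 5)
Your proof is correct, but it takes a genuinely different route from the paper's. You first reformulate $T_{i_{1}\ldots i_{n}}(x)\in K$ as $x\in f_{i_{1}}\circ\cdots\circ f_{i_{n}}(K)$ and then settle the equivalence by a nested-cylinder (Cantor intersection) argument: the cylinders are nested by the Hutchinson relation, their diameters are at most $|K|r^{n}$ with $r=\max_{1\leq j\leq m}r_{j}<1$, so they shrink to the single point $\lim_{n\to\infty}f_{i_{1}}\circ\cdots\circ f_{i_{n}}(0)$. The paper never passes to preimages of the cylinders: for the forward direction it uses continuity of the $f_{j}$'s to write $T_{i_{1}\ldots i_{n}}(x)=\lim_{M\to\infty}f_{i_{n+1}}\circ\cdots\circ f_{i_{M}}(0)\in K$, and for the reverse direction it estimates directly $|f_{i_{1}}\circ\cdots\circ f_{i_{n}}(0)-x|\leq r^{n}|x_{n}|$, where $x_{n}=T_{i_{1}\ldots i_{n}}(x)\in K$ is uniformly bounded. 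Both arguments hinge on the same contraction bound, so the difference is one of organization rather than substance; still, each buys something. Your version gives the cleaner conceptual picture (the lemma is literally the preimage form of the nested-cylinder description of codings, and your reverse-direction work simultaneously proves the identity $\lim_{n\to\infty}f_{i_{1}}\circ\cdots\circ f_{i_{n}}(0)=\bigcap_{n=1}^{\infty}f_{i_{1}}\circ\cdots\circ f_{i_{n}}(K)$ that the introduction asserts without proof), while the paper's direct orbit estimate is marginally more self-contained and introduces the dynamical viewpoint $x_{n}=T_{i_{1}\ldots i_{n}}(x)$ that is used throughout the rest of the paper. Two small caveats on your write-up: in the forward direction you quote the introduction's identity $x=\bigcap_{n}f_{i_{1}}\circ\cdots\circ f_{i_{n}}(K)$, which is essentially the content of the lemma itself after your preimage reformulation --- this is harmless only because your reverse-direction argument independently establishes it, and you should say so explicitly; and the step ``the intersection point is $\lim_{n}f_{i_{1}}\circ\cdots\circ f_{i_{n}}(0)$'' is not purely definitional, since $0$ need not lie in $K$, so $f_{i_{1}}\circ\cdots\circ f_{i_{n}}(0)$ need not lie in the $n$-th cylinder; it requires the one-line comparison $|f_{i_{1}}\circ\cdots\circ f_{i_{n}}(0)-f_{i_{1}}\circ\cdots\circ f_{i_{n}}(z)|\leq r^{n}|z|$ for some $z\in K$, which is precisely the estimate the paper writes out.
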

 \begin{proof}
Assume $x\in K$ has a coding $(i_{n})_{n=1}^{\infty}.$ By the continuity of the maps $f_{j}$ the following equation holds for all $n\in\mathbb{N}:$ $$T_{i_{1}\ldots i_{n}}(x)=\lim_{M\to\infty}f_{i_{n+1}}\circ\cdots\circ f_{i_{M}}(0).$$ Obviously the right hand side of the above equation is an element of $K$. As such we have deduced the rightwards implication.

Now let us assume that $(i_{n})_{n=1}^{\infty}$ is such that $T_{i_{1}\ldots i_{n}}(x)\in K$ for all $n\in\mathbb{N}.$ Let $x_{n}=T_{i_{1}\ldots i_{n}}(x).$ We observe the following:
$$|f_{i_{1}}\circ\cdots\circ f_{i_{n}}(0)-x|=|f_{i_{1}}\circ\cdots\circ f_{i_{n}}(0)-f_{i_{1}}\circ\cdots\circ f_{i_{N}}(x_{n})|\leq r^{n}|x_{n}|.$$
 Where $r=\max\limits_{1\leq j\leq m} r_{j}.$ By our assumption $x_{n}\in K,$ in which case $|x_{n}|$ can be bounded above by a constant independent of $x$ and $n.$ It follows that $\lim_{n\to\infty}f_{i_{1}}\circ\cdots\circ f_{i_{n}}(0)=x$ and $(i_{n})_{n=1}^{\infty}$ is a coding for $x.$
\end{proof}
The dynamical interpretation provided by Lemma \ref{Dynamical lemma} will make our proofs and exposition far more succinct. The following proposition is a straightforward consequence of Lemma \ref{Dynamical lemma}.
\setcounter{tmp}{\value{Lemma}}
\setcounter{Proposition}{1}
\begin{Proposition}
\label{non unique prop}
Let $x\in K$.  There exists $(i_{n})_{n=1}^{N}\in\{1,\ldots,m\}^{N}$ and distinct $k,l\in \{1,\ldots,m\}$ satisfying $ T_{i_{1}\cdots i_{N}k}(x)\in K$ and $T_{i_{1}\cdots i_{N}l}(x)\in K$ if and only if  $x\notin U.$
\end{Proposition}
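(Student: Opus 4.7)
The proposition says: $x$ fails to be univoque precisely when there is some finite prefix $i_1\cdots i_N$ after which two different digits $k\neq l$ both keep $x$ inside $K$ under the corresponding maps $T$. Since by Lemma \ref{Dynamical lemma} a coding of $x$ is exactly a sequence $(i_n)$ with $T_{i_1\cdots i_n}(x)\in K$ for every $n$, the plan is to translate the existence of two codings into such a branching condition and vice versa.

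For the forward direction, assume such $(i_1,\ldots,i_N)$, $k$, $l$ are given. I would use the fact (noted earlier in the paper, from $\pi$ being surjective) that every element of $K$ admits at least one coding. Apply this to the points $T_{i_1\cdots i_Nk}(x)\in K$ and $T_{i_1\cdots i_N l}(x)\in K$, obtaining codings $(p_n)_{n=1}^\infty$ and $(q_n)_{n=1}^\infty$ respectively. By Lemma \ref{Dynamical lemma}, $T_{p_1\cdots p_n}(T_{i_1\cdots i_Nk}(x))\in K$ for all $n$, which is the same as $T_{i_1\cdots i_N k p_1\cdots p_n}(x)\in K$. The converse direction of Lemma \ref{Dynamical lemma} then shows that $(i_1,\ldots,i_N,k,p_1,p_2,\ldots)$ is a coding of $x$; the analogous construction gives a coding $(i_1,\ldots,i_N,l,q_1,q_2,\ldots)$. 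Because $k\neq l$, these codings disagree at position $N+1$, so $x$ has at least two distinct codings and $x\notin U$.

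For the reverse direction, suppose $x\notin U$, so there exist two distinct codings $(i_n)_{n=1}^\infty$ and $(j_n)_{n=1}^\infty$ of $x$. Let $N\geq 0$ be the largest integer such that $i_n=j_n$ for every $1\leq n\leq N$ (with $N=0$ meaning they differ already in the first coordinate); then $k:=i_{N+1}$ and $l:=j_{N+1}$ are distinct elements of $\{1,\ldots,m\}$. The forward direction of Lemma \ref{Dynamical lemma} applied to each of the two codings gives $T_{i_1\cdots i_N k}(x)\in K$ and $T_{i_1\cdots i_N l}(x)\in K$, producing exactly the required prefix and branching digits.

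The proof is essentially bookkeeping: the only small obstacle is the forward direction, where one must remember to use surjectivity of $\pi$ to extend the finite prefixes $i_1\cdots i_N k$ and $i_1\cdots i_N l$ into full codings of $x$, rather than treating them as codings of $x$ themselves. No estimates or case analysis beyond this are needed.
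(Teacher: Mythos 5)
Your proof is correct and follows exactly the route the paper intends: the paper offers no separate argument, declaring the proposition a straightforward consequence of Lemma \ref{Dynamical lemma}, and your write-up---extending the two branches $T_{i_{1}\cdots i_{N}k}(x)$ and $T_{i_{1}\cdots i_{N}l}(x)$ to full codings of $x$ via surjectivity of $\pi$, and reading off the first disagreement of two distinct codings in the converse---is precisely that consequence spelled out, including the necessary allowance of the empty prefix $N=0$. The one step you leave implicit, namely that $T_{i_{1}\cdots i_{n}}(x)\in K$ for all $n\leq N$ so that Lemma \ref{Dynamical lemma} applies to every prefix of the assembled sequence, is immediate since $T_{i_{1}\cdots i_{n}}(x)=f_{i_{n+1}}\circ\cdots\circ f_{i_{N}}\circ f_{k}\bigl(T_{i_{1}\cdots i_{N}k}(x)\bigr)$ and each $f_{j}$ maps $K$ into $K$.
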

Let $I_{j}=f_{j}(K),$ $I_{j}$ is precisely the set of points that are mapped back into $K$ by $T_{j}.$ The following reformulation of $U$ is a consequence of Proposition \ref{non unique prop}:
\begin{equation}
\label{univoque reformulation}
U=\Big\{x\in K: \nexists 1\leq k<l\leq m \textrm{ and } (i_{n})_{n=1}^{N} \textrm{ such that } T_{i_{1}\cdots i_{N}}(x)\in I_{k}\cap I_{l}\Big\}.
\end{equation}By Lemma \ref{Dynamical lemma} we know that  every $x\in K$ has an infinite sequence of maps which under finite iteration always map $x$ back into $K$. What (\ref{univoque reformulation}) states is that if $x\in U$,  then each of these finite iterations always avoid the intersections of the $I_{j}'s.$

In what follows we always assume that there are $s$ pairs $(i_k,\,j_k)\in\{1,\ldots,m\}^{2}$ such that $H_k:=I_{i_k}\cap I_{j_k}\neq \emptyset$ and $i_{k}\neq j_{k}$. In fact we will always assume that we are in the case where each $H_k:=[a_k,b_k]$ is a nontrivial interval and is contained in the interior of $K$. There is no loss of generality in making this assumption. If for some $[a_k,b_k]$ it is true that $a_k=a$ or $b_k=b$, then the conclusion of Theorem \ref{MainTheorem} is still true under appropriate modified hypothesis. The argument required is the same as that given below except for an additional notational consideration. We may also assume that the elements of $\{H_k\}$ are piecewise disjoint and that they are located from left to right in $K$. In the dynamical literature these regions $H_{k}$ are commonly referred to as switch regions, see for example \cite{KarmaCor}. We give a simple example to illustrate the above.
\setcounter{tmp}{\value{Proposition}}
\setcounter{Example}{2}

\begin{Example}
 Let $[0,1/(\beta-1)]$ be the attractor of $\{f_{0}(x)=\beta^{-1}x,\,f_{1}(x)=\beta^{-1}(x+1)\}$, where $1<\beta<2$. Then we define $T_{0}(x)=\beta x,\,T_{1}(x)=\beta x-1$, see Figure 1.
\begin{figure}[h]\label{figure1}
\centering
\begin{tikzpicture}[scale=5]
\draw(0,0)node[below]{\scriptsize 0}--(.382,0)node[below]{\scriptsize$\frac{1}{\beta}$}--(.618,0)node[below]{\scriptsize$\frac{1}{\beta(\beta-1)}$}--(1,0)node[below]{\scriptsize$\frac{1}{\beta-1}$}--(1,1)--(0,1)node[left]{\scriptsize$\frac{1}{\beta-1}$}--(0,.5)--(0,0);
\draw[dotted](.382,0)--(.382,1)(0.618,0)--(0.618,1);
\draw[thick](0,0)--(0.618,1)(.382,0)--(1,1);
\end{tikzpicture}\caption{The dynamical system for $\{T_{0},\,T_{1}\}$}
\end{figure}
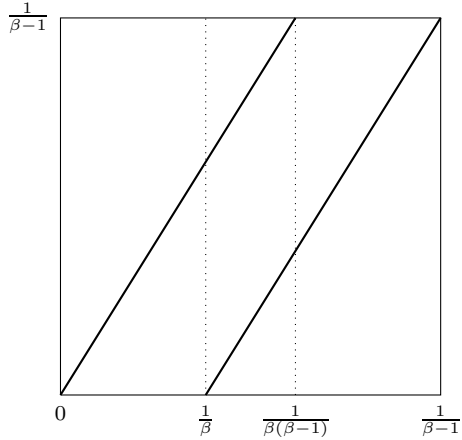

From this figure, we know that $f_{0}([0,1/(\beta-1)])\cap f_{1}([0,1/(\beta-1)])=[1/\beta,\,1/\beta(\beta-1)]$. For any $x\in [1/\beta,\,1/\beta(\beta-1)]$ both $T_0$ and $T_1$ map $x$ into $[0,1/(\beta-1)]$.
\end{Example}
Now we can state our first result. Recall that $\widetilde{U}$ is defined to be the set of symbolic codings of points in $U.$
\setcounter{tmp}{\value{Example}}
\setcounter{Theorem}{3}
\begin{Theorem}\label{MainTheorem}
For each $a_k$ and $b_k$, suppose there exist two finite sequences $(\eta_{1}\ldots \eta_{P})\in\{1,\ldots,\,m\}^{P}$,
$(\omega_{1}\ldots \omega_{Q})\in\{1,\ldots,\,m\}^{Q}$
such that
\begin{eqnarray}\label{Assumption1}
T_{\eta_{1}\ldots \eta_{P}}(a_k)\in \bigcup_{i=1}^{s}(a_i,\,b_i)
\end{eqnarray}
 and
\begin{eqnarray}\label{Assumption2}
T_{\omega_{1}\ldots \omega_{Q}}(b_k)\in \bigcup_{i=1}^{s}(a_i,\,b_i),
\end{eqnarray}
Then $\widetilde{U}$ is a subshift of finite type.
\end{Theorem}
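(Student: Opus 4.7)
The strategy is to encode $\widetilde{U}$ as the set of infinite label sequences of walks in a finite labeled directed graph, which is automatically a subshift of finite type (see Chapter 2 of \cite{LM}). The starting point is a dynamical reformulation. Using the identity $\pi(\sigma^{n}(i))=T_{i_{1}\cdots i_{n}}(\pi(i))$, which follows directly from $T_{j}=f_{j}^{-1}$ and the definition of $\pi$, the characterisation (\ref{univoque reformulation}) rewrites as
\[
\widetilde{U}=\Bigl\{(i_{n})_{n=1}^{\infty}\in\{1,\ldots,m\}^{\mathbb{N}} : \pi(\sigma^{n}(i))\in K\setminus\bigcup_{k=1}^{s}H_{k}\text{ for every }n\geq 0\Bigr\},
\]
so $\widetilde{U}$ is the survivor set of the shift under the ``hole'' $\pi^{-1}\bigl(\bigcup_{k}H_{k}\bigr)$.

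Next, I would construct a finite set of cut points $E\subset K$ containing $\{a,b,a_{1},b_{1},\ldots,a_{s},b_{s}\}$, defining a partition $\mathcal{P}$ of $K$ into closed subintervals with the Markov property: for each $J\in\mathcal{P}$ and each $j\in\{1,\ldots,m\}$, the image $T_{j}(J)\cap K$ is either contained in $\bigcup_{k}H_{k}$ or is a union of elements of $\mathcal{P}$ lying in $K\setminus\bigcup_{k}H_{k}$. The construction begins with $E_{0}=\{a,b,a_{1},b_{1},\ldots,a_{s},b_{s}\}$ and iteratively adjoins any image $T_{j}(p)$ of an existing breakpoint $p$ that falls in $K\setminus\bigcup_{k}H_{k}$. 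Hypotheses (\ref{Assumption1})--(\ref{Assumption2}) are precisely what force this iteration to terminate in finitely many steps: each endpoint $a_{k}$ (resp.\ $b_{k}$) admits a witnessing word $\eta$ (resp.\ $\omega$) that carries it into the open interior of some switch region within $P$ (resp.\ $Q$) iterations, so its orbit contributes only finitely many points to $E$, and an inductive argument bounds every chain of iterates within the safe region $K\setminus\bigcup_{k}H_{k}$ in the same way.

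With the finite Markov partition in hand, form the labeled digraph $G$ whose vertex set is $\{J\in\mathcal{P}: J\subseteq K\setminus\bigcup_{k}H_{k}\}$ and which has a $j$-labeled edge $J\to J'$ whenever $J'\subseteq T_{j}(J)$. Combining Lemma \ref{Dynamical lemma} with the dynamical reformulation above shows that the label sequences of infinite walks in $G$ coincide with $\widetilde{U}$, which is therefore a subshift of finite type.

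\textbf{Main obstacle.} The heart of the argument is the construction of $\mathcal{P}$, and specifically the verification that the refinement terminates. The hypothesis supplies only one witnessing word per endpoint, so one must verify carefully that breakpoints generated along non-witnessing compositions also yield only finitely many new cut points. The natural line of reasoning is that every newly created breakpoint is itself a forward iterate of some $a_{k}$ or $b_{k}$ and therefore inherits an admissible extension landing in the open switch region interior, capping the depth of the refinement process uniformly.
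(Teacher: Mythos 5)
There is a genuine gap, and it sits exactly where you flagged your ``main obstacle'': the termination of the Markov refinement does not follow from hypotheses (\ref{Assumption1}) and (\ref{Assumption2}). Those hypotheses supply one witnessing word for $a_k$ and one for $b_k$, and along a witnessing word the branch choices are pinned down only on that particular composition. But $a_k$ lies in the closed overlap $H_k=I_{i_k}\cap I_{j_k}$, so \emph{two} maps $T_{i_k}$ and $T_{j_k}$ are applicable there, and your refinement rule (``adjoin any image $T_j(p)$ that falls in $K\setminus\bigcup_k H_k$'') must follow both. The witnessing word $\eta$ constrains only one of these branches; the other branch image, say $T_{j'}(a_k)$, may land in the safe region and thereafter (where the branch is unique) have an infinite orbit that never meets any switch region --- it can even be a univoque point with an infinite, non-eventually-periodic orbit. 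In that case your cut-point set $E$ is infinite, the partition $\mathcal{P}$ is not finite, and the construction of the finite labeled graph collapses. Your proposed repair --- ``every newly created breakpoint inherits an admissible extension landing in the open switch region interior'' --- is precisely the assertion that \emph{all} forward images of $a_k,b_k$ along \emph{all} admissible branches eventually enter hole interiors; this is a strictly stronger hypothesis than the theorem's and is not implied by it, so at this point you are assuming what needs to be proved.

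The paper avoids orbit-finiteness altogether, and its device is worth internalising because it also repairs your argument. By continuity of the $T_j$'s, the hypotheses yield one-sided collars $(a_k-\delta_{a_k},a_k)$ and $(b_k,b_k+\delta_{b_k})$ that the same witnessing words carry into $\bigcup_i(a_i,b_i)$; by Proposition \ref{non unique prop} every point of the fattened set $H=\bigcup_i[a_i-\delta,b_i+\delta]$ is therefore non-univoque. One then partitions $K$ by level-$L$ cylinders of the IFS with $L$ so large that every cylinder has diameter less than $\delta$, and lets $\mathbb{F}$ be the set of length-$L$ words whose cylinders meet $\bigcup_k H_k$. The sandwich $\bigcup_k H_k\subset\mathbb{F}'\subset H$ (where $\mathbb{F}'$ is the union of those cylinders) shows that avoiding the true holes is equivalent to avoiding the fattened cylinder union, i.e.\ that $x\in U$ if and only if its coding contains no block from $\mathbb{F}$; hence $\widetilde U$ is the subshift of finite type with forbidden words $\mathbb{F}$. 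The cylinder partition is automatically Markov for the inverse maps, so no cut-point orbits need to be tracked: the hypothesis is used only once, to certify that the collar points are non-univoque. If you replace your orbit-generated cut points by cylinder endpoints at a scale finer than $\delta$, your graph construction goes through verbatim (and is essentially the construction the paper carries out in Section 5 to realise $U$ as a graph-directed set).
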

\section{Proof of Theorem \ref{MainTheorem}}
We give a constructive proof of Theorem \ref{MainTheorem}.
\begin{proof}
By our assumptions and the continuity of the $T_{j}$'s, we can find $\delta_{a_k}>0$ and $\delta_{b_k}>0$ such that\[T_{\eta_{1}\ldots \eta_{P}}(a_k-\delta_{a_k}, a_k)\in \bigcup_{i=1}^{s}(a_i,\,b_i)\] and
\[T_{\omega_{1}\ldots \omega_{Q}}(b_k, b_k+\delta_{b_k})\in \bigcup_{i=1}^{s}(a_i,\,b_i).\]
Moreover, we may assume that
$[a_k-\delta_{a_k}, b_k+\delta_{b_k}]\cap [a_j-\delta_{a_j}, b_j+\delta_{b_j}]=\emptyset$ for each $1\leq k<j\leq s.$ Let $\delta=\min\limits_{1\leq k\leq s}\{\delta_{a_k},\,\delta_{b_k}\}$ and $H=\cup_{i=1}^{s}[a_i-\delta, b_i+\delta]$. By the monotonicity of the $T_{j}$'s and Proposition \ref{non unique prop} it is clear that $H$ is in the complement of the univoque set. We partition $K$ via the iterated function system.
For any $L$ we have $$K=\bigcup_{(i_{1},\ldots,i_{L})\in\{1,\ldots,m\}^{L}}f_{i_{1}}\circ\cdots\circ f_{i_{L}}(K).$$ We also assume $L$ is sufficiently large such that $|f_{i_{1}}\circ\cdots\circ f_{i_{L}}(K)|<\delta$ for all $(i_{1},\ldots,i_{L})\in\{1,\ldots,m\}^{L}$. We have a corresponding  partition of the symbolic space $\{1,\ldots,\,m\}^{\mathbb{N}}$ provided by the cylinders of length $L$. For each $(i_{1},\ldots, i_{L})\in\{1,\ldots,m\}^{L}$ let $$C_{i_{1}\ldots i_{L}}=\Big\{(x_{n})\in\{1,\ldots,m\}^{\mathbb{N}}:x_{n}=i_{n} \textrm{ for } 1\leq n\leq L\Big\}.$$ The set $\{C_{i_{1}\ldots {i_{L}}}\}_{(i_{1},\ldots ,i_{L})\in\{1,\ldots,m\}^{L}}$ is a partition of $\{1,\ldots,m\}^{\mathbb{N}},$ and  $f_{i_{1}}\circ\cdots\circ f_{i_{L}}(K)=\pi(C_{i_{1}\ldots {i_{L}}})$.
Let \[\mathbb{F}=\Big\{(i_{1},\ldots,i_{L})\in\{1,\ldots,m\}^{L}:f_{i_{1}}\circ\cdots\circ f_{i_{L}}(K)\cap \bigcup_{k=1}^{s} H_k\neq\varnothing\Big\}\]
and
\[\mathbb{F^{'}}=\bigcup_{(i_{1},\ldots,i_{L})\in \mathbb{F}}\pi(C_{i_{1}\ldots {i_{L}}}).\]
 By our assumptions on the size of our cylinders the following inclusions hold \[\bigcup_{k=1}^{s} H_k\subset \mathbb{F^{'}}\subset H.\]
Using these inclusions it is a straightforward observation that $x\notin U$ if and only if there exists $(\theta_1,\ldots,\theta_{n_1})\in\{1,\ldots,m\}^{n_1}$ such that  $T_{\theta_1\ldots\theta_{n_1}}(x)\in \mathbb{F^{'}}$. Showing there exists $(\theta_1,\ldots,\theta_{n_1})\in\{1,\ldots,m\}^{n_1}$ such that  $T_{\theta_1\ldots\theta_{n_1}}(x)\in \mathbb{F^{'}}$ if and only if $x$ has a coding containing a block from $\mathbb{F}$ is straightforward. Therefore if we take $\mathbb{F}$ to be the set of forbidden words defining a subshift of finite type we see that $\widetilde{U}$ is a subshift of finite type.
\end{proof}
The conditions in Theorem \ref{MainTheorem} are met for a large class of self-similar sets, provided that the attractor is an interval. We recall the definition of a universal coding. A  coding $(d_n)_{n=1}^{\infty}\in\{1,\ldots,m\}^{\mathbb{N}}$ is called a universal coding for $x$ if  given any finite block $(\delta_1,\ldots, \delta_k)\in \{1,\ldots,m\}^{k}$, there exists $j$ such that $d_{j+i}=\delta_i$ for $1\leq i\leq k$. Theorem 1.4 from \cite{BS} implies that Lebesgue almost every $x\in K$ has a universal coding. This result implies the following corollary.
\begin{Corollary}\label{Universal coding}
For Lebesgue almost every $x\in K$, there exists a sequence $(i_{n})_{n=1}^{N}$ and $H_{k}$ such that $T_{i_{1}\ldots i_{N}}(x)$ is in the interior of $H_{k}$.
\end{Corollary}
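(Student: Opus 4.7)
My plan is to use the universal coding hypothesis from \cite{BS} together with a routine cylinder-shrinking argument. The strategy is: pick in advance a finite block of symbols that forces a point into the interior of some $H_k$, and then observe that a universal coding of $x$ must eventually contain this block, so that iterating $x$ by the corresponding $T$-maps lands it inside $\mathrm{int}(H_k)$.

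First, I would fix any one of the switch regions $H_k = [a_k, b_k]$, which by assumption is a nontrivial interval contained in the interior of $K$, so $\mathrm{int}(H_k)$ is a nonempty open subinterval of $K$. Writing $r = \max_{1\le j\le m} r_j < 1$, the cylinders $f_{\delta_1}\circ \cdots \circ f_{\delta_\ell}(K)$ of level $\ell$ cover $K$ and have diameters at most $r^\ell|K|$. Choosing $\ell$ large enough that $r^\ell|K|$ is smaller than, say, one third of $|H_k|$, any level-$\ell$ cylinder meeting the middle third of $H_k$ is forced to lie inside $\mathrm{int}(H_k)$. Since the cylinders cover $K \supseteq H_k$, at least one such cylinder does meet the middle third, hence we obtain a finite block $(\delta_1, \ldots, \delta_\ell) \in \{1,\ldots,m\}^\ell$ with
\[
f_{\delta_1}\circ \cdots \circ f_{\delta_\ell}(K) \subseteq \mathrm{int}(H_k).
\]

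Next, by Theorem 1.4 of \cite{BS}, for Lebesgue almost every $x \in K$ there exists a universal coding $(d_n)_{n=1}^\infty$ of $x$. Applying the universality property to the specific block $(\delta_1,\ldots,\delta_\ell)$ chosen above, there is some $j$ with $d_{j+i} = \delta_i$ for $1 \le i \le \ell$. Using the identity established in the proof of Lemma \ref{Dynamical lemma},
\[
T_{d_1 \ldots d_j}(x) = \lim_{M\to\infty} f_{d_{j+1}} \circ \cdots \circ f_{d_M}(0) \in f_{\delta_1}\circ \cdots \circ f_{\delta_\ell}(K) \subseteq \mathrm{int}(H_k),
\]
which is exactly the conclusion of the corollary with $N = j$ and $(i_1,\ldots,i_N) = (d_1,\ldots,d_j)$.

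There is no real obstacle here: the only ingredient beyond the cited theorem is the geometric fact that any nonempty open subinterval of $K$ contains a full level-$\ell$ cylinder for $\ell$ large, which is immediate from $\max r_j < 1$ and the self-similar covering of $K$. The role of the hypothesis that $K$ is an interval (and that each $H_k$ has nonempty interior inside $K$) is precisely to guarantee that such a cylinder exists.
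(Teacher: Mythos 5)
Your proof is correct and follows exactly the route the paper intends: the paper derives this corollary directly from the universal coding theorem of \cite{BS}, leaving implicit precisely the cylinder-shrinking argument (a level-$\ell$ cylinder contained in $\mathrm{int}(H_k)$, found via the occurrence of the corresponding block in the universal coding, combined with the identity $T_{d_1\ldots d_j}(x)=\lim_{M\to\infty}f_{d_{j+1}}\circ\cdots\circ f_{d_M}(0)$ from Lemma \ref{Dynamical lemma}) that you spell out. Your write-up is a faithful and complete fleshing-out of the paper's one-line deduction, with no gaps.
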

By this corollary, it follows that the hypothesis of Theorem \ref{MainTheorem} are failed only when an endpoint of a $H_{k}$'s is contained in a set of Lebesgue measure zero. As such the conditions of Theorem \ref{MainTheorem} hold in a generic sense. As we will see in section $4,$ a stronger statement holds when we restrict to $\beta$-expansion.
\setcounter{tmp}{\value{Corollary}}
\setcounter{Remark}{1}
\begin{Remark}
In \cite{DKK},\cite{K1},\cite{K2} and \cite{DL}, they all consider homogeneous IFS's. We however allow the similarity ratios to be different. Another advantage of our method is that for different IFS's, we can find the forbidden blocks quickly and uniformly.
\end{Remark}
\begin{Remark}\label{General self-similar sets}
 The method used in Theorem  \ref{MainTheorem} cannot easily be implemented when $K$ is not an interval. The key difficulty is that when we construct the neighborhoods of $a_k$ and $b_k$, the images of these neighborhoods may not be mapped into $\cup_{k=1}^{s}H_k$ by the same maps that worked for $a_{k}$ and $b_{k}$. 
\end{Remark}
\begin{Remark}\label{Highdimension}
In higher dimensions we can prove an analogous result. The proof requires a minor modification, the main ideas are outlined in the final section. For self-affine sets which are simple sets, for instance,  rectangles, cubes(see the definition of self-affine sets in \cite{FG}), our theorem still holds. However, in this case we do not know whether an analogue of Corollary \ref{Universal coding} is true.
\end{Remark}

 Using a similar idea to the proof of Theorem \ref{MainTheorem}, we can prove following theorem:
\setcounter{tmp}{\value{Remark}}
\setcounter{Theorem}{4}
\begin{Theorem}\label{Closedset}
For any arbitrary interval $K$, $U$ is closed if and only if $\widetilde{U}$ is a subshift of finite type.
\end{Theorem}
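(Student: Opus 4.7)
The plan is to prove both implications, with the nontrivial direction reducing to the construction already carried out inside the proof of Theorem~\ref{MainTheorem}. For the easy direction, suppose $\widetilde{U}$ is a subshift of finite type. Then $\widetilde{U}$ is a closed subset of the compact space $\{1,\ldots,m\}^{\mathbb{N}}$, hence compact, and since $\pi$ is continuous and $K$ is Hausdorff, $U=\pi(\widetilde{U})$ is compact in $K$, and therefore closed.

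For the converse, suppose $U$ is closed. Inspecting the proof of Theorem~\ref{MainTheorem}, the hypotheses (\ref{Assumption1}) and (\ref{Assumption2}) are used only to produce a single $\delta>0$ such that the enlarged switch regions $[a_k-\delta,b_k+\delta]$ are pairwise disjoint and all contained in $U^{c}$; once such a $\delta$ is in hand, the remainder of that proof goes through verbatim. I will show that closedness of $U$ supplies such a $\delta$ automatically. Each switch region $H_k=I_{i_k}\cap I_{j_k}$ is a compact subset of $U^{c}$, because every point of $H_k$ admits at least two codings, one beginning with $i_k$ and another beginning with $j_k$. Consequently $\bigcup_{k=1}^{s}H_k$ is a compact subset of the open set $U^{c}$, so $d\bigl(\bigcup_k H_k,\,U\bigr)>0$; moreover the $H_k$ are pairwise disjoint compact intervals, so $\min_{k\neq j}d(H_k,H_j)>0$. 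Taking $\delta$ smaller than both quantities produces the required buffer.

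Having secured $\delta$, the argument finishes exactly as in Theorem~\ref{MainTheorem}: pick $L$ so large that every length-$L$ cylinder has diameter less than $\delta$, set
\[
\mathbb{F}=\Big\{(i_{1},\ldots,i_{L})\in\{1,\ldots,m\}^{L}:f_{i_{1}}\circ\cdots\circ f_{i_{L}}(K)\cap \bigcup_{k=1}^{s}H_k\neq\emptyset\Big\},
\]
and run the same equivalence chain: $x\notin U$ iff some iterate $T_{\theta_{1}\ldots\theta_{n}}(x)$ lies in $\mathbb{F}'$, iff some coding of $x$ contains a block from $\mathbb{F}$. Hence $\widetilde{U}$ is the subshift of finite type whose forbidden words are $\mathbb{F}$; shift-invariance, implicit in calling it a subshift, follows from the elementary observation that if $(i_{n})_{n=1}^{\infty}$ is the unique coding of $x$ then $(i_{n})_{n=2}^{\infty}$ is the unique coding of $T_{i_{1}}(x)$. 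The main conceptual step is the topological observation of the previous paragraph, namely that ``$U$ closed'' is exactly the clean condition needed to replace the analytic hypotheses (\ref{Assumption1})--(\ref{Assumption2}); beyond this, no serious obstacle is anticipated.
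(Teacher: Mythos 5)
Your proof is correct and is precisely the argument the paper intends: the paper gives no explicit proof of Theorem \ref{Closedset}, saying only that it follows by a similar idea to the proof of Theorem \ref{MainTheorem}, and you have correctly isolated that hypotheses (\ref{Assumption1})--(\ref{Assumption2}) are used there solely to produce a $\delta$-neighbourhood of $\bigcup_{k=1}^{s}H_k$ lying in the complement of $U$, which closedness of $U$ together with compactness of $\bigcup_{k=1}^{s}H_k$ supplies directly, after which the cylinder/forbidden-word construction runs verbatim. Your easy direction (a subshift of finite type is compact, so $U=\pi(\widetilde{U})$ is compact, hence closed) is the standard one and is also fine.
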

This theorem generalises Komornik and de Vries' statement, see the corresponding  equivalent statements in \cite[Theorem 1.8]{MK}. Moreover, in higher dimensions similar result still holds.

\section{$\beta$-expansions case}
In this section we restrict to $\beta$-expansions and give an alternative method for determining the subshift of finite type representation of $\widetilde{U}.$
Firstly, we recall the relevant IFS for studying $\beta$-expansions. Given $\beta>1$ define the IFS:
\[g_j(x)=\dfrac{x+j}{\beta},\,j\in\{0,\ldots,\lceil \beta \rceil-1\}.\]
The self-similar set for this IFS is the interval
$\mathcal{A}_{\beta}=[0,(\lceil \beta \rceil-1)(\beta-1)^{-1}]$.

We now define greedy and lazy expansions.
\begin{Definition}
The greedy map $G:\mathcal{A}_{\beta}\to \mathcal{A}_{\beta},$ is defined by
\begin{equation*}
G(x)=\left\lbrace\begin{array}{cc}
                 \beta x\mod 1& x\in[0,1)\\

                 \beta x-[\beta]& x\in\Big[1, \frac{\lceil \beta \rceil-1}{\beta-1}\Big]
                \end{array}\right.
\end{equation*}
\end{Definition}
For any $n\geq 1$ and $x\in\mathcal{A}_{\beta}$, we define $a_n(x)=[\beta G^{n-1}(x)]$, where $[y]$ denotes the integer part of $y\in\mathbb{R}$.
We then  have
\begin{eqnarray*}
x&=&\dfrac{a_{1}(x)}{\beta}+\dfrac{G(x)}{\beta}\\
  &=&\dfrac{a_{1}(x)}{\beta}+\dfrac{a_2(x)}{\beta^2}+\dfrac{G^2(x)}{\beta^2}\\
&&\vdots\\
&=&\sum_{n=1}^{\infty}\dfrac{a_n(x)}{\beta^n}
\end{eqnarray*}
The sequence  $(a_n)_{n=1}^{\infty}\in\{0,\ldots,\lceil \beta \rceil-1\}^{\mathbb{N}}$ generated by $G$ is called the greedy expansion or greedy coding. The orbit $\{G^{n}(x)\}_{n=1}^{\infty}$ is called the greedy orbit of $x$.

Similarly, we define the lazy map and the corresponding lazy expansion as follows.
\begin{Definition}
The lazy map $L:\mathcal{A}_{\beta}\to \mathcal{A}_{\beta},$ is defined by

\begin{equation*}
L(x)=\left\lbrace\begin{array}{cc}
                 \beta x& x\in\Big[0,\frac{(\lceil \beta \rceil-1)}{(\beta(\beta-1))}\Big]\\

                 \beta x-b_{j}& x\in \Big( \frac{\lceil \beta \rceil-1}{\beta(\beta-1)}+\frac{b_{j}-1}{\beta}, \frac{\lceil \beta \rceil-1}{\beta(\beta-1)}+\frac{b_{j}}{\beta}\Big]\textrm{ for }b_{j}\geq 1
                \end{array}\right.
\end{equation*}

By Lemma \ref{Dynamical lemma}, for each $x\in\mathcal{A}_{\beta}$ we can generate a $\beta$-expansion for $x$ by iterating $L$. The $\beta$-expansion generated by $L$ is called the lazy expansion of $x$. The orbit $\{L^{n}(x)\}_{n=1}^{\infty}$ is called the lazy orbit of $x$.
\end{Definition}
 Given $i\in\{0,\ldots ,\lceil \beta \rceil -1\}$ it is a simple calculation to show that $g_{i}(\mathcal{A}_{\beta})\cap g_{j}(\mathcal{A}_{\beta})\neq\varnothing$ if and only if $j=i-1,i,i+1$. In which case the nontrivial switch regions are of the form:
\[
S_{l}=\Big[\frac{l}{\beta},\frac{\lceil \beta \rceil-1}{\beta(\beta-1)}+\frac{l-1}{\beta}\Big],
\]
for some $1\leq l\leq \lceil \beta \rceil-1$. We remark that the greedy and lazy maps only differ on the intervals $S_{l}.$ Clearly an $x\in\mathcal{A}_{\beta}$ is a univoque point if and only if it is never mapped into an interval $S_{l}.$ This implies the following important technical result.
\setcounter{tmp}{\value{Definition}}
\setcounter{Proposition}{2}
\begin{Proposition}
Given $x\in K,$ we have that $x\in U$ if and only if its greedy and lazy expansions coincide.
\end{Proposition}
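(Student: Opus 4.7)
The plan is to exploit two complementary facts: first, the greedy and lazy maps $G$ and $L$ agree on $\mathcal{A}_{\beta}\setminus\bigcup_l S_l$ and differ precisely on the switch regions $S_l$; second, by the univoque reformulation (\ref{univoque reformulation}), $x\in U$ is equivalent to no finite $T$-iterate of $x$ landing in any switch region (since the $S_l$ are exactly the nontrivial pairwise intersections of first-level intervals in the $\beta$-expansion setting).

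As a preliminary step, I would verify directly from $T_j(x)=\beta x-j$ that a point $y\in\mathcal{A}_{\beta}$ admits more than one admissible digit, i.e.\ more than one $j$ with $T_j(y)\in\mathcal{A}_{\beta}$, if and only if $y$ lies in some $S_l$; and that in that case the two admissible choices are $l-1$ and $l$, corresponding exactly to the lazy and greedy choices respectively. This identifies the switch regions with the branching loci of admissible $T_j$'s and pins down which digit each of $G$ and $L$ produces there.

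For the forward direction, assume $x\in U$. The greedy orbit $\{G^n(x)\}_{n\geq 0}$ arises from iterating admissible $T_j$'s applied to $x$, so by (\ref{univoque reformulation}) no $G^n(x)$ can lie in any $S_l$. Since $G=L$ off the switch regions, a trivial induction shows $G^n(x)=L^n(x)$ for all $n$, and hence $a_n(x)=b_n(x)$ for all $n$, so the greedy and lazy expansions coincide.

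For the converse, assume the two expansions coincide, so $G^n(x)=L^n(x)$ for all $n$. If some $G^n(x)$ lay in a switch region $S_l$, then by the preliminary step the greedy digit $a_{n+1}(x)=l$ and the lazy digit $b_{n+1}(x)=l-1$ would differ, a contradiction. Hence the greedy orbit avoids $\bigcup_l S_l$. At each such $y=G^n(x)$ there is therefore a unique admissible $j$ with $T_j(y)\in\mathcal{A}_{\beta}$. A digit-by-digit forcing argument, combined with Lemma \ref{Dynamical lemma}, then shows that any coding of $x$ must agree with the greedy coding at every position, so $x\in U$.

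The only subtle point is the forcing step in the reverse direction: one must rule out the possibility that some alternative coding of $x$ diverges from the greedy coding at a point where the greedy orbit is not in a switch region. This is handled by the preliminary observation that outside $\bigcup_l S_l$ the admissible digit is unique, so there is no room for an alternative coding to branch off.
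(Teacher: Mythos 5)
Your proposal is correct and follows essentially the same route as the paper, which derives the proposition directly from the two observations you make central: that $G$ and $L$ differ exactly on the switch regions $S_{l}$ (the branching loci where both digits $l-1$ and $l$ are admissible), and that by the reformulation (\ref{univoque reformulation}) a point is univoque precisely when its orbit never enters any $S_{l}$. The paper states this as an immediate consequence without spelling out the induction and the digit-forcing step in the converse; your write-up simply supplies those details, and they are sound.
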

This simple observation will be a powerful tool, it allows us to give a lexicographic characterisation of $\widetilde{U}$ which will help us determine our subshift of finite type representation.

Each element of $U\setminus\{0,(\lceil \beta \rceil-1)(\beta-1)^{-1}\}$ is eventually mapped into $[(\lceil \beta \rceil-1-\beta)(\beta-1)^{-1},1]$ by $G$ and $L$ (as by definition the orbits of $G$ and $L$ coincide for univoque points). Moreover, once inside this interval they are not mapped out, see  \cite[page 2]{GS}. Therefore, due to the countable stability of Hausdorff dimension(\cite[page 32]{FG}), to determine the Hausdorff dimension of $U$, we only need to find the Hausdorff dimension of $U\cap[(\lceil \beta \rceil-1-\beta)(\beta-1)^{-1},1]$. We denote $U\cap[(\lceil \beta \rceil-1-\beta)(\beta-1)^{-1},1]$ and $\pi^{-1}(U\cap [(\lceil \beta \rceil-1-\beta)(\beta-1)^{-1},1])$ by $U_{\beta}$ and $\widetilde{U}_{\beta}$ respectively.

Let $(\alpha_n)_{n=1}^{\infty}$ be the greedy expansion of $1$ and $(\varepsilon_n)_{n=1}^{\infty}=(\overline{\alpha_n})_{n=1}^{\infty}=(\lceil \beta \rceil-1-\alpha_n)_{n=1}^{\infty}$.
We are interested in giving conditions when $\widetilde{U}_{\beta}$ is a subshift of finite type. In this paper we consider only the collection of $\beta$ such that  the greedy expansion of 1 is infinite. If the greedy expansion of 1 is finite, then $\widetilde{U}_{\beta}$ may be not a subshift of finite type, the good examples are Tribonacci numbers, see Theorems 1.2 and 1.5 from \cite{MK}. 
 Let $\sigma$ denote the usual shift map. We now introduce the lexicographic ordering on infinite sequences, given $(a_{n})_{n=1}^{\infty},(b_{n})_{n=1}^{\infty}\in\{0,\ldots, \lceil \beta \rceil -1 \}^{\mathbb{N}}$ we say that  $(a_n)_{n=1}^{\infty}<(b_n)_{n=1}^{\infty}$ if there exists $M\in \mathbb{N}$ such that $(a_1,\ldots, a_M)=(b_1,\ldots, b_M)$ and $a_{M+1}<b_{M+1}$. There also exists a lexicographic ordering on finite sequences, this is defined in the obvious way.
\setcounter{tmp}{\value{Proposition}}
\setcounter{Theorem}{3}
\begin{Theorem}\label{MainTheorem1}
If there exists $M\in\mathbb{N}$ such that $(\varepsilon_{M+n})_{n=1}^{\infty}>(\alpha_n)_{n=1}^{\infty}$ then $\widetilde{U}_{\beta}$ is a subshift of finite type. More specifically, there exists $p>M$ such that
\begin{align*}
\widetilde{U}_{\beta}=\Big\{(d_n)_{n=1}^{\infty}:(\varepsilon_1,\ldots,\varepsilon_p,(\lceil \beta \rceil-1)^{\infty})<\sigma^{k}((d_n)_{n=1}^{\infty})<(\alpha_1,\ldots,\alpha_p,(0)^{\infty}) \mbox{for any } k\geq 0\Big\}.
\end{align*}

\end{Theorem}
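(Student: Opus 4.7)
The plan is to invoke a Parry-type lexicographic description of $\widetilde{U}_\beta$ and then show that the hypothesis forces the infinitary lex inequalities to collapse to conditions depending only on the first $p$ digits of each shift, which is precisely the definition of a subshift of finite type.

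First I would record the standard characterization (which is contained in \cite{MK} and can be derived from Proposition 4.3 by observing that univoque orbits never enter a switch region $S_\ell$ and that the greedy and lazy orbits remain in $[(\lceil\beta\rceil-1-\beta)(\beta-1)^{-1},1]$):
\[
\widetilde{U}_\beta = \bigl\{(d_n) : (\varepsilon_n) <_{\mathrm{lex}} \sigma^k((d_n)) <_{\mathrm{lex}} (\alpha_n)\ \text{for every}\ k \geq 0\bigr\},
\]
with strict inequalities (the assumption that the greedy expansion of $1$ is infinite is what makes strictness at the upper endpoint work out). Next, the hypothesis $(\varepsilon_{M+n})_{n=1}^\infty > (\alpha_n)_{n=1}^\infty$ supplies a smallest index $q \geq 1$ with $\varepsilon_{M+i} = \alpha_i$ for $1 \leq i < q$ and $\varepsilon_{M+q} > \alpha_q$. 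Complementing via $\varepsilon_j = \lceil\beta\rceil - 1 - \alpha_j$, this is the same as $\alpha_{M+i} = \varepsilon_i$ for $i < q$ and $\alpha_{M+q} < \varepsilon_q$. I would set $p := M + q$, so $p > M$.

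The inclusion $\supseteq$ of the claimed equality is immediate, since $(\alpha_1,\ldots,\alpha_p, 0^\infty) \leq_{\mathrm{lex}} (\alpha_n)$ and $(\varepsilon_1,\ldots,\varepsilon_p, (\lceil\beta\rceil-1)^\infty) \geq_{\mathrm{lex}} (\varepsilon_n)$, so the truncated bounds imply the Parry bounds. For the reverse inclusion, fix $(d_n) \in \widetilde{U}_\beta$ and $k \geq 0$. The upper Parry bound yields $(d_{k+1},\ldots,d_{k+p}) \leq_{\mathrm{lex}} (\alpha_1,\ldots,\alpha_p)$ as finite words; the key step is to rule out equality. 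If equality held, then
\[
\sigma^{k+M}((d_n)) = (\alpha_{M+1},\ldots,\alpha_{M+q}, d_{k+p+1},\ldots) = (\varepsilon_1,\ldots,\varepsilon_{q-1},\alpha_{M+q}, d_{k+p+1},\ldots),
\]
and $\alpha_{M+q} < \varepsilon_q$ forces $\sigma^{k+M}((d_n)) <_{\mathrm{lex}} (\varepsilon_n)$, contradicting the lower Parry bound. Hence $(d_{k+1},\ldots,d_{k+p}) <_{\mathrm{lex}} (\alpha_1,\ldots,\alpha_p)$ strictly, which promotes to $\sigma^k((d_n)) < (\alpha_1,\ldots,\alpha_p, 0^\infty)$. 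A completely symmetric argument, using $\varepsilon_{M+q} > \alpha_q$ to contradict the upper Parry bound when $(d_{k+1},\ldots,d_{k+p}) = (\varepsilon_1,\ldots,\varepsilon_p)$, yields $\sigma^k((d_n)) > (\varepsilon_1,\ldots,\varepsilon_p, (\lceil\beta\rceil-1)^\infty)$.

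Since the truncated lex inequalities depend only on the length-$p$ block $(d_{k+1},\ldots,d_{k+p})$, the set of forbidden $p$-words cutting out $\widetilde{U}_\beta$ is finite, so $\widetilde{U}_\beta$ is a subshift of finite type. The principal obstacle I expect is pinning down the lex characterization used in the first step in the correct strict form, especially at the upper endpoint $x=1$; this is precisely where the standing assumption that the greedy expansion of $1$ is infinite is used, and it is handled by results already in \cite{MK}.
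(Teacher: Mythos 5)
Your proposal is correct and follows essentially the same route as the paper's own proof: both invoke the lexicographic characterization of $\widetilde{U}_{\beta}$ from \cite{MK} (Theorem \ref{Uniquecodings}), choose $p>M$ at the first position where $(\varepsilon_{M+1},\ldots,\varepsilon_p)$ strictly exceeds $(\alpha_1,\ldots,\alpha_{p-M})$, and rule out equality of a length-$p$ block with $(\alpha_1,\ldots,\alpha_p)$ or $(\varepsilon_1,\ldots,\varepsilon_p)$ by shifting $M$ places and contradicting the opposite strict Parry bound. Your write-up merely makes explicit two steps the paper leaves implicit (why the block equality is forced, and why forbidding finitely many $p$-words yields the subshift of finite type), so no further changes are needed.
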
 The hypothesis of Theorem \ref{MainTheorem1} is in fact equivalent to that of Theorem \ref{MainTheorem}. We omit the details of this equivalence as it hinders our exposition. The spirit of this proof is similar to the proof of Theorem \ref{MainTheorem}. Heuristically speaking, we are giving an equivalent argument but expressed in the language of sequences. When expressed in this language the proof becomes more concise and provides a more efficient method for determining the set of forbidden words.

The following criterion  of the unique codings is pivotal. In fact, in \cite{DKK}, \cite{K1}, \cite{K2} and \cite{DL}, their approaches strongly depend on this criterion.
\begin{Theorem}\label{Uniquecodings}
 Let $(a_n)_{n=1}^{\infty}$ be a coding of $x\in [(\lceil \beta \rceil-1-\beta)(\beta-1)^{-1},1]$. Then $(a_n)_{n=1}^{\infty}\in \widetilde{U}_{\beta}$ if and only if \[(\varepsilon_n)_{n=1}^{\infty}<\sigma^{k}((a_n)_{n=1}^{\infty})<(\alpha_n)_{n=1}^{\infty}\] for any $k\geq 0$.
\end{Theorem}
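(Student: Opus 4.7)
The plan is to combine the preceding Proposition (which states that $x\in U$ if and only if its greedy and lazy expansions coincide) with the classical lexicographic characterisations of the greedy and lazy $\beta$-expansions going back to Parry and to Erd\H{o}s--Jo\'o--Komornik. Concretely, I would first establish two auxiliary lemmas. \emph{Lemma A:} a sequence $(a_n)\in\{0,\ldots,\lceil\beta\rceil-1\}^{\N}$ is the greedy expansion of some $x\in\mathcal{A}_{\beta}$ if and only if $\sigma^{k}((a_n))<(\alpha_n)_{n=1}^{\infty}$ lexicographically for every $k\ge 0$. \emph{Lemma B (symmetric):} $(a_n)$ is the lazy expansion of $x$ if and only if $\sigma^{k}((a_n))>(\varepsilon_n)_{n=1}^{\infty}$ for every $k\ge 0$. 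Lemma A follows from the defining property of $G$: the digit $a_{k+1}$ produced by the greedy algorithm at each step lies strictly below what the greedy expansion of $1$ would prescribe unless $G^{k}x$ lies at the boundary, and iterating this observation while using that $(\alpha_n)$ itself is the greedy expansion of $1$ yields the strict lexicographic comparison. Lemma B then follows from Lemma A via the reflection $x\mapsto (\lceil\beta\rceil-1)/(\beta-1)-x$, which conjugates $G$ with $L$, sends the digit $a$ to $\lceil\beta\rceil-1-a$, and hence carries $(\alpha_n)$ to $(\varepsilon_n)$.

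Given these two lemmas the theorem is immediate. For the forward direction, if $(a_n)\in\widetilde{U}_{\beta}$ then the preceding Proposition tells us that the greedy and lazy expansions of $x$ coincide; since both must equal the unique coding, the sequence $(a_n)$ is simultaneously the greedy and the lazy expansion of $x$, and Lemmas A and B then supply the two strict inequalities at every $k\ge 0$. For the converse, if the two inequalities hold then Lemmas A and B show $(a_n)$ is both the greedy and the lazy expansion of $x$; these expansions therefore coincide, and the preceding Proposition yields $x\in U_{\beta}$, with $(a_n)$ necessarily its unique coding.

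The main obstacle is the careful handling of strictness and boundary cases in Lemma A. Parry's classical characterisation is normally stated with $\le$ in terms of the \emph{quasi-greedy} expansion of $1$, and it is precisely the standing hypothesis that the greedy expansion of $1$ is infinite that collapses ``quasi-greedy'' to ``greedy'' and turns $\le$ into the strict $<$ appearing in the theorem. An analogous delicate step is needed on the lazy side to promote $\ge$ to $>$, and the restriction of the ambient point to the invariant interval $[(\lceil\beta\rceil-1-\beta)/(\beta-1),1]$ is what makes both sides work in unison. Once these boundary cases are settled, the remainder of the argument is routine lexicographic bookkeeping combined with the preceding Proposition.
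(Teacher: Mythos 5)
Your proposal is correct in substance but takes a genuinely different route from the paper, for the simple reason that the paper has no route: its entire proof of Theorem \ref{Uniquecodings} is the sentence ``This theorem is a corollary of Theorem 1.1 \cite{MK}.'' You instead reconstruct the statement from first principles: Parry's lexicographic characterisation of greedy expansions (your Lemma A), its mirror image for lazy expansions obtained from the reflection $x\mapsto(\lceil\beta\rceil-1)(\beta-1)^{-1}-x$, which conjugates $G$ with $L$ and carries $(\alpha_n)_{n=1}^{\infty}$ to $(\varepsilon_n)_{n=1}^{\infty}$ (your Lemma B), and the proposition immediately preceding the theorem, that $x\in U$ if and only if its greedy and lazy expansions coincide. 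The gluing is sound in both directions: under the two strict inequalities, Lemmas A and B make $(a_n)_{n=1}^{\infty}$ simultaneously the greedy and the lazy expansion of its value $x$ (any sequence satisfying Lemma A is greedy for \emph{some} $y$, and since $\pi((a_n))=x$ forces $y=x$, it is greedy for $x$), hence $x\in U_{\beta}$; conversely the unique coding of a univoque point must equal both expansions. What your route buys is self-containedness and transparency about where the paper's standing hypothesis (the greedy expansion of $1$ is infinite) enters, namely in collapsing the quasi-greedy comparison sequence of Parry's criterion to $(\alpha_n)_{n=1}^{\infty}$ itself so that the inequalities become strict. What the paper's citation buys is brevity, outsourcing to \cite{MK} exactly the boundary bookkeeping you flag as the delicate step.

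One caveat deserves to be made explicit rather than left inside that delicate step: strictness in Lemma A genuinely fails at points whose greedy orbit lands exactly on $1$, and the infinite-greedy-expansion hypothesis alone does not rule this out. If $\beta$ is a univoque base, then $x=1$ lies in the theorem's interval, belongs to $U_{\beta}$, and yet $\sigma^{0}((a_n)_{n=1}^{\infty})=(\alpha_n)_{n=1}^{\infty}\not<(\alpha_n)_{n=1}^{\infty}$, so the equivalence as stated tacitly excludes this case; this blind spot is in the paper's statement, not only in your sketch. Your argument closes cleanly once you add the observation that whenever $1$ has more than one expansion, the orbit of a univoque point can never hit $1$ (appending a second expansion of $1$ would produce a second coding), so $G^{k}x<1$ for all $k$ and Parry's comparison is strict at every shift; the lazy side is handled symmetrically via the reflection. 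With that sentence added, your proof is complete and is a legitimate, more informative alternative to the paper's bare citation.
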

This theorem is a corollary of Theorem 1.1 \cite{MK}.
\begin{proof}[Proof of  Theorem \ref{MainTheorem1}]
From Theorem \ref{Uniquecodings} we know that  \[\widetilde{U}_{\beta}=\{(a_n)_{n=1}^{\infty}:(\varepsilon_n)_{n=1}^{\infty}<\sigma^{k}((a_n)_{n=1}^{\infty})<(\alpha_n)_{n=1}^{\infty}\,\mbox{for any}\,k\geq 0\}.\]
Let $M$ be as in the statement of Theorem \ref{MainTheorem1}, there exists $p>M$ such that  \[(\varepsilon_{M+1},\ldots,\varepsilon_{p})>(\alpha_1,\ldots,\alpha_{p-M}).\]
Recall $(\varepsilon_n)=(\overline{\alpha_n})$, thus we equivalently have \[(\varepsilon_{1},\ldots,\varepsilon_{p-M})>(\alpha_{M+1},\ldots,\alpha_{p}).\]
We shall prove that $\widetilde{U}_{\beta}=U^{'}_{\beta}$ where \[U^{'}_{\beta}:=\left\{(a_n)_{n=1}^{\infty}:(\varepsilon_1,\ldots,\varepsilon_p,(\lceil \beta \rceil-1)^{\infty})<\sigma^{k}((a_n)_{n=1}^{\infty})<(\alpha_1,\ldots,\alpha_p,(0)^{\infty})\,\mbox{for any }\, k\geq 0\right\}.\]
By Theorem \ref{Uniquecodings} we have $U^{'}_{\beta}\subseteq \widetilde{U}_{\beta}$, therefore it suffices to prove the opposite inclusion.

Let $(a_n)_{n=1}^{\infty}\in \widetilde{U}_{\beta}$ and assume that $(a_n)_{n=1}^{\infty}\notin U^{'}_{\beta}$. Therefore, we have $\sigma^{k_0}((a_n)_{n=1}^{\infty})\geq(\alpha_1,\ldots,\alpha_p,(0)^{\infty})$ or $(\varepsilon_1,\ldots,\varepsilon_p,(\lceil \beta \rceil-1)^{\infty})\geq\sigma^{k_0}((a_n)_{n=1}^{\infty})$ for some $k_0\geq 0$. But this is not possible. For instance, if $(\varepsilon_1,\ldots,\varepsilon_p,(\lceil \beta \rceil-1)^{\infty})\geq\sigma^{k_0}((a_n)_{n=1}^{\infty})$ then  $(a_{k_0+1},\ldots, a_{k_0+p})=(\varepsilon_1,\ldots,\varepsilon_p)$ since $(a_n)_{n=1}^{\infty}\in \widetilde{U}_{\beta}.$ Hence,
\[(a_{k_0+M+1},\ldots, a_{k_0+p})=(\varepsilon_{M+1},\ldots,\varepsilon_p)>(\alpha_1,\ldots,\alpha_{p-M})\]
but this contradicts the fact that $(a_n)_{n=1}^{\infty}\in \widetilde{U}_{\beta}$. The other case is proved similarly. As such we may conclude that $\widetilde{U}_{\beta}\subseteq U^{'}_{\beta}.$
\end{proof}
\setcounter{tmp}{\value{Theorem}}
\setcounter{Remark}{5}
\begin{Remark}
Theorem \ref{MainTheorem1} implies that when the greedy orbit of $1$ falls into the interior of the switch region, then $\widetilde{U}_{\beta}$ is a subshift of finite type. This theorem is a little weaker than Komornik and de Vries' statement, see \cite[Theorem 1.8]{MK}. However, we can find the forbidden blocks more quickly. It is not necessary to use Theorem \ref{Uniquecodings} to find the subshift of finite type, while Komornik and de Vries' method depends on it. We have proved in Theorem \ref{MainTheorem} that for self-similar sets a similar idea  still works. Moreover, we mentioned in Theorem  \ref{Closedset} that  for any arbitrary interval $K$, $U$ is closed if and only if $\widetilde{U}$ is a subshift of finite type.  As such Theorem \ref{MainTheorem} can be interpreted as a generalisation of Komornik and de Vries' result to the setting of self-similar sets.
\end{Remark}
\begin{Remark}
In \cite{K2}, Kall\'{o}s used similar ideas to prove a similar theorem. However, the argument in the proof of Theorem \ref{MainTheorem1} may not be applied in other complicated settings as  generally we cannot find a criteria for unique codings in terms of a symbolic representation.
\end{Remark}
In the setting of $\beta$-expansions, let \[A=\{\beta\in(1,\infty):\mbox{ 1 has a unique expansion in base}\, \beta\}.\] Schmeling \cite{Schmeling} (also see Daroczy and Katai \cite{DKK}) proved the Lebesgue measure of $A$ is zero. In fact Schmeling proved a much stronger result. This statement implies the following corollary.
\setcounter{tmp}{\value{Remark}}
\setcounter{Corollary}{7}
\begin{Corollary}
\label{beta corollary}
For almost every $\beta\in(1,\infty)$ the hypothesis of Theorem \ref{MainTheorem1} are satisfied.
\end{Corollary}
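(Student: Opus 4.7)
The plan is to derive Corollary \ref{beta corollary} directly from Schmeling's theorem by establishing the implication: if $\beta\notin A$ then the hypothesis of Theorem \ref{MainTheorem1} holds. Combined with Schmeling's statement that $|A|=0$, this immediately yields the corollary.

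The key calculation is the following. Suppose $1$ admits a non-greedy $\beta$-expansion $(c_{n})_{n=1}^{\infty}$, and let $k$ be the first position where $(c_{n})$ differs from the greedy expansion $(\alpha_{n})$. Since the greedy expansion is lex-maximal, $c_{k}<\alpha_{k}$. Using $\sum \alpha_{n}/\beta^{n}=\sum c_{n}/\beta^{n}=1$ together with the bound $c_{n}\leq \lceil\beta\rceil-1$, I would derive
\begin{align*}
\pi(\sigma^{k}(\varepsilon)) &=\sum_{n\geq 1}\frac{\lceil\beta\rceil-1-\alpha_{k+n}}{\beta^{n}} \geq \sum_{n\geq 1}\frac{c_{k+n}-\alpha_{k+n}}{\beta^{n}}\\
&=\alpha_{k}-c_{k}\geq 1=\pi(\alpha).
\end{align*}
Converting this real-number inequality into the lexicographic statement $\sigma^{k}(\varepsilon)\geq\alpha$, and then (after at most one further shift) the strict version $\sigma^{M}(\varepsilon)>\alpha$, recovers the hypothesis of Theorem \ref{MainTheorem1}.

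The main obstacle is the boundary case $\pi(\sigma^{k}(\varepsilon))=\pi(\alpha)$, which forces $(c_{n})$ to end in $(\lceil\beta\rceil-1)^{\infty}$ and makes $\sigma^{k}(\varepsilon)$ a second expansion of $1$ whose lex comparison with $\alpha$ is delicate: one might have $\sigma^{k}(\varepsilon)=\alpha$ as sequences, in which case no single $M$ gives a strict inequality. Such $\beta$ correspond to greedy expansions of $1$ with a specific eventually periodic tail, so they form a countable (hence null) set that can be absorbed into Schmeling's null set. Alternatively, one can appeal to the ``stronger result'' of Schmeling alluded to in the text---asserting density or equidistribution of the greedy orbit of $1$ for almost every $\beta$---which supplies the strict lex inequality $\sigma^{M}(\varepsilon)>\alpha$ directly for a.e.\ $\beta$, bypassing the boundary analysis entirely.
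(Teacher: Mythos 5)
There is a genuine gap, and it sits exactly at the step you flag as a mere ``conversion'': passing from the real-number inequality $\pi(\sigma^{k}(\varepsilon))\geq 1=\pi(\alpha)$ to the lexicographic inequality $\sigma^{k}(\varepsilon)\geq\alpha$. For arbitrary digit sequences in the overlapping regime, lexicographic order and order of projections simply do not align, and this misalignment is precisely the phenomenon that makes univoque sets nontrivial. Concretely, for $\beta\in(1,2)$ one has $(0,1,1,1,\ldots)<(1,0,0,0,\ldots)$ lexicographically while $\pi(011^{\infty})=\frac{1}{\beta(\beta-1)}>\frac{1}{\beta}=\pi(10^{\infty})$; for the golden ratio, $b=(1,0,1,1,1,\ldots)$ satisfies $b<\alpha=(1,1,0,0,\ldots)$ lexicographically yet $\pi(b)=2/\beta>1$. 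So the contrapositive you need, ``$\sigma^{k}(\varepsilon)<\alpha$ implies $\pi(\sigma^{k}(\varepsilon))<1$,'' is false without an admissibility hypothesis on the tails of $\sigma^{k}(\varepsilon)$, and your argument as written does not supply one. Note also that the problem is not confined to the equality case $\pi(\sigma^{k}(\varepsilon))=\pi(\alpha)$ that you isolate: even strict projection inequality at your particular $k$ gives no lexicographic conclusion at that $k$, so the ``boundary analysis'' framing understates the issue.

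The clean bridge is purely lexicographic and is already in the paper: Theorem \ref{Uniquecodings} (equivalently, the Erd\H{o}s--Jo\'o--Komornik criterion applied to expansions of $1$) says that for $\beta$ with infinite greedy expansion of $1$, the expansion $\alpha$ is the unique expansion of $1$ if and only if $\sigma^{k}(\varepsilon)<\alpha$ for all $k$ (the companion condition $\sigma^{k}(\alpha)<\alpha$ being automatic for an infinite greedy expansion). Its contrapositive gives $\beta\notin A\Rightarrow\exists M:\sigma^{M}(\varepsilon)\geq\alpha$ with no projection computation at all; your countability disposal of the residual cases is then correct and in fact sharpens: $\sigma^{M}(\varepsilon)=\alpha$ forces $\alpha_{M+n}=\overline{\alpha_{n}}$ for all $n$, hence $\alpha$ purely periodic of period $2M$, a countable set of $\beta$, and the $\beta$ with finite greedy expansion of $1$ (excluded by the standing assumption of Section 4) are likewise countable. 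Your fallback option is sound and is in substance the paper's own route: the paper deduces the corollary directly from Schmeling's ``much stronger result'' (density of the greedy orbit of $1$ for almost every $\beta$), which, per Remark 4.6's identification of the hypothesis of Theorem \ref{MainTheorem1} with ``the greedy orbit of $1$ falls into the interior of the switch region,'' yields the strict inequality for almost every $\beta$ immediately. So either repair works; the projection calculation, however, should be removed rather than patched.
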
This should be compared with Corollary \ref{Universal coding}. We see that Corollary \ref{beta corollary} allows us to conclude a stronger result in the setting of $\beta$-expansions.

\section{Hausdorff dimension of univoque set}
\subsection{Graph-directed self-similar sets}
Before  demonstrating how to calculate the dimension of a univoque set, we introduce the notion of a graph-directed self-similar set. The terminology we use is taken from \cite{MW}.

A graph-directed construction in $\mathbb{R}$ consists of the following.
\begin{enumerate}
\item A finite union of bounded closed intervals $\cup_{u=1}^{n}J_u$ such that the $J_u$ are piecewise disjoint.
\item A directed graph $G=(V,E)$ with vertex set $V=\{1,\ldots,n\}$ and edge set $E.$ Moreover, we assume that for any $u\in V$ there is some $v\in V$ such that $(u,v)\in E.$
\item For each edge $(u,v)\in E$ there exists a similitude $f_{u,v}(x)=r_{uv}x+a_{uv}$, where $r_{uv}\in (0,1)$ and $a_{uv}\in \mathbb{R}$.  Moreover, for each $u \in V$ the set $\{f_{u,v}(J_v):(u,v)\in E\}$ satisfies the strong separation condition, i.e.,
\[\bigcup_{(u,v)\in E}f_{u,v}(J_v)\subseteq J_u,\] and the elements of $\{f_{u,v}(J_v):(u,v)\in E\}$ are piecewise disjoint.

\end{enumerate}
As is the case for self-similar sets, we have the following result.
\begin{Theorem}\label{Existence}
 For each graph-directed construction, there exists a unique vector of non-empty compact sets $(C_1,\ldots,C_n)$ such that, for each $u\in V$, $C_{u}=\bigcup_{(u,v)\in E}f_{u,v}(C_v)$.
\end{Theorem}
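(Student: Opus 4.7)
The plan is to apply the Banach fixed point theorem on an appropriate product of hyperspaces, exactly as Hutchinson's original argument does for an ordinary IFS. First I would set up the ambient complete metric space: let $\mathcal{K}(J_u)$ denote the collection of non-empty compact subsets of $J_u$ equipped with the Hausdorff metric $d_{H}$. Since each $J_u$ is compact, each $\mathcal{K}(J_u)$ is complete. I would then work on the product $\mathcal{X}:=\mathcal{K}(J_1)\times\cdots\times\mathcal{K}(J_n)$ with the metric
\[
D\bigl((A_1,\ldots,A_n),(B_1,\ldots,B_n)\bigr)=\max_{1\leq u\leq n} d_{H}(A_u,B_u),
\]
which inherits completeness from the factors.

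Next I would define the operator $F:\mathcal{X}\to\mathcal{X}$ componentwise by
\[
F(A_1,\ldots,A_n)_{u}=\bigcup_{(u,v)\in E}f_{u,v}(A_v).
\]
The three verifications that $F$ lands in $\mathcal{X}$ are: the right-hand side is non-empty because by assumption (2) every vertex $u$ has at least one outgoing edge; it is compact as a finite union of continuous images of compact sets; and it is contained in $J_u$ by the inclusion in assumption (3). None of these uses the strong separation part of (3), which is not needed for existence and uniqueness of the attractor (it will matter only later for dimension computations).

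The core step is showing $F$ is a contraction. Set $r:=\max_{(u,v)\in E} r_{uv}<1$. Using the standard Hausdorff-metric inequalities $d_{H}(A\cup B, A'\cup B')\leq\max\{d_{H}(A,A'),d_{H}(B,B')\}$ and $d_{H}(f_{u,v}(A),f_{u,v}(B))=r_{uv}\,d_{H}(A,B)$, I would estimate
\[
d_{H}\Bigl(\bigcup_{(u,v)\in E}f_{u,v}(A_v),\bigcup_{(u,v)\in E}f_{u,v}(B_v)\Bigr)\leq\max_{(u,v)\in E}r_{uv}\,d_{H}(A_v,B_v)\leq r\,D(A,B),
\]
and then take the maximum over $u$ to conclude $D(F(A),F(B))\leq r\,D(A,B)$.

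Applying the Banach fixed point theorem to $F$ on the complete metric space $\mathcal{X}$ then yields a unique fixed point $(C_1,\ldots,C_n)\in\mathcal{X}$, which by construction satisfies $C_u=\bigcup_{(u,v)\in E}f_{u,v}(C_v)$ for each $u\in V$, giving both existence and uniqueness. There is no real obstacle here; the only mild subtlety is being careful that $F$ preserves non-emptiness, which is exactly why the hypothesis in (2) that every vertex has an outgoing edge is included. The proof is essentially the graph-directed upgrade of Hutchinson's classical argument, and the separation condition plays no role until one turns to computing $\dim_{H}(C_u)$.
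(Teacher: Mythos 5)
Your proof is correct, and it is worth noting that the paper itself offers no proof of this theorem: it is stated as quoted background and attributed to Mauldin and Williams \cite{MW}, so there is no internal argument to compare against line by line. Your contraction-mapping route is the standard Hutchinson-style one, and it goes through exactly as you describe: completeness of each $\mathcal{K}(J_u)$, invariance of the product space under $F$ (hypothesis (2) gives non-emptiness of each component, the inclusion in (3) gives $F(A)_u\subseteq J_u$), and the two Hausdorff-metric inequalities yield the contraction ratio $r=\max_{(u,v)\in E}r_{uv}<1$; you are also right that the strong separation part of (3) plays no role here and only matters for the dimension formula. For comparison, Mauldin and Williams' original construction has a slightly different flavour: they realise each $C_u$ as a nested intersection $\bigcap_{n}\bigcup f_{e_1}\circ\cdots\circ f_{e_n}(J_{v_n})$ over length-$n$ paths starting at $u$ (the geometric counterpart of projecting from path space), with uniqueness handled separately, whereas your fixed-point formulation delivers existence and uniqueness in one stroke. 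One small scope remark: Banach's theorem as you invoke it gives uniqueness only among vectors with $C_u\subseteq J_u$, while the statement asserts uniqueness among all vectors of non-empty compact sets. This is easily repaired: $F$ maps $\prod_{u}\mathcal{K}(\mathbb{R})$ into itself, satisfies the same contraction estimate there, and $\mathcal{K}(\mathbb{R})$ is complete, so the fixed point is unique in the larger space as well (equivalently, any solution is a fixed point of the contraction $F$, and comparing it with your fixed point via the contraction estimate forces equality).
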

 We let $K^{*}:=\cup_{u=1}^{n}C_u$ and call it the graph-directed self-similar set of this construction.
To each graph-directed construction we can associate a weighted incidence matrix $A$. This matrix is defined by $A=(r_{u,v})_{(u,v)\in V\times V}$, for simplicity, we assume that $r_{u,v}=0$ if $(u,v)\notin E$. For each $t\geq 0$ we define another adjacency matrix $A^t=(a_{t,u,v})_{(u,v)\in V\times V},$ where $a_{t,u,v}=r_{u,v}^t$. Let $\Phi(t)$ denote the largest nonnegative eigenvalue of $A^t$.
A graph is strongly connected if for any two vertices $u,v\in V$, there exists a directed path from $u$ to $v$. A strongly connected component of $G$ is a subgraph $C$ of $G$ such that $C$ is strongly connected, let $SC(G)$ be the set of all the  strongly connected components of $G$.
Now we state the main result of \cite{MW}.
\begin{Theorem}\label{SC}
 For every graph-directed construction such that $G$ is strongly connected, the Hausdorff dimension of $K^{*}$ is $t_0$, where $t_0$ is uniquely defined by $\Phi(t_0)=1$.
\end{Theorem}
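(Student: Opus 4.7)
The plan is to prove Theorem~\ref{SC} by matching upper and lower bounds on $\dim_{H}(K^{*})$, using Perron--Frobenius theory for the eigenvalue side and a natural Markov measure for the geometric side. First I would establish the structural properties of $\Phi$: because $G$ is strongly connected, the nonnegative matrix $A^{t}=(r_{uv}^{t})$ is irreducible for every $t \geq 0$, so Perron--Frobenius provides a strictly positive right eigenvector $(p_{u}^{(t)})$ with eigenvalue $\Phi(t)$, which is a simple root of the characteristic polynomial. Since each $r_{uv} \in (0,1)$ and every vertex has an outgoing edge, $\Phi$ is continuous and strictly decreasing, with $\Phi(0) \geq 1$ and $\Phi(t) \to 0$ as $t \to \infty$; this guarantees that $t_{0}$ exists uniquely and lets me freely translate between inequalities on $\Phi(t)$ and inequalities on $t$ versus $t_{0}$.

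For the upper bound I would cover each $C_{u}$ by cylinders indexed by paths in $G$. Writing $f_{\gamma} = f_{u_{0},u_{1}} \circ \cdots \circ f_{u_{n-1},u_{n}}$ and $r_{\gamma} = \prod_{i=0}^{n-1} r_{u_{i},u_{i+1}}$ for a path $\gamma = (u_{0},\ldots,u_{n})$ with $u_{0} = u$, iteration of the fixed-point identity in Theorem~\ref{Existence} shows that $\{f_{\gamma}(J_{u_{n}})\}_{\gamma}$ is a cover of $C_{u}$ whose $t$-sum of diameters is bounded by $(\max_{v}|J_{v}|)^{t}\sum_{v}(A^{t})^{n}_{u,v} = O(\Phi(t)^{n})$. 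For any $t > t_{0}$ this tends to $0$, so $\mathcal{H}^{t}(C_{u}) = 0$ and $\dim_{H} C_{u} \leq t_{0}$.

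For the lower bound I would build a Markov probability measure on infinite paths starting at $u$ using the transition probabilities $P_{uv} = r_{uv}^{t_{0}} p_{v}^{(t_{0})} / p_{u}^{(t_{0})}$, which sum to $1$ in $v$ by the eigenvector identity, and push it forward through the projection $(u_{0},u_{1},\ldots) \mapsto \bigcap_{n} f_{(u_{0},\ldots,u_{n})}(J_{u_{n}})$ to obtain a probability measure $\nu_{u}$ on $C_{u}$. By construction, the cylinder $f_{\gamma}(C_{u_{n}})$ has $\nu_{u}$-mass equal to $(p_{u_{n}}^{(t_{0})}/p_{u}^{(t_{0})})\, r_{\gamma}^{t_{0}}$, which is comparable to $r_{\gamma}^{t_{0}}$ uniformly in $\gamma$. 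The key geometric estimate is $\nu_{u}(B(x,r)) \lesssim r^{t_{0}}$: given $x$ and $r$, I choose the largest $n$ for which the level-$n$ cylinder through $x$ has diameter at least $r$, and use the strong separation condition to cap the number of level-$n$ cylinders meeting $B(x,r)$ by an absolute constant. The mass-distribution principle then yields $\dim_{H} C_{u} \geq t_{0}$. Strong connectedness forces $\dim_{H} C_{v} = \dim_{H} C_{u}$ for all $u,v$, since along any path $v \to u$ the set $C_{v}$ contains a similar copy of $C_{u}$, so $\dim_{H}(K^{*}) = t_{0}$.

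The main obstacle is the ball estimate in the lower bound. The subtlety is that strong separation only directly gives a positive gap between sibling cylinders of a common parent, so to conclude that $B(x,r)$ meets only boundedly many level-$n$ cylinders one has to identify the correct parent scale and use the uniform lower bound $r_{\min} = \min_{(u,v) \in E} r_{uv} > 0$ to relate the chosen level-$n$ diameter to $r$. Once this bookkeeping is in place the argument is essentially Moran's theorem adapted from a single IFS to the graph-directed setting.
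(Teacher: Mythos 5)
The paper itself contains no proof of Theorem~\ref{SC}: it is quoted verbatim as the main result of Mauldin and Williams \cite{MW}, so there is no in-paper argument to compare against. Your proposal is a correct reconstruction of the standard proof of that cited result, specialised to the line, and it follows essentially the same route as \cite{MW}: Perron--Frobenius for the irreducible matrix $A^{t}$ gives existence, uniqueness and strict monotonicity data for $\Phi$ (note $\Phi(0)\geq 1$ because every vertex has out-degree at least one, so each row of $A^{0}$ has row sum $\geq 1$; in the degenerate case of a single cycle one gets $t_{0}=0$ and the theorem is trivial), the path-cylinder cover gives the upper bound via $\sum_{v}(A^{t})^{n}_{u,v}\asymp\Phi(t)^{n}$ (the two-sided bound needing the strictly positive eigenvector), and the Markov measure with $P_{uv}=r_{uv}^{t_{0}}p_{v}^{(t_{0})}/p_{u}^{(t_{0})}$ plus the mass-distribution principle gives the lower bound. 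Two small remarks. First, the ``main obstacle'' you flag is even lighter in this one-dimensional setting than you suggest: once the stopping-time family $\Gamma(r)$ consists of pairwise disjoint intervals of length between $c_{1}r$ and $r$ (which is where $r_{\min}$ and $\min_{v}|J_{v}|/\max_{v}|J_{v}|$ enter), an interval of length $2r$ can meet at most $2/c_{1}+2$ of them by disjointness alone -- no gap constant from strong separation is needed, although in the $\mathbb{R}^{d}$ generality of \cite{MW} a volume or gap argument of the kind you describe is genuinely required. Second, \cite{MW} actually proves more under strong connectivity, namely $0<\mathcal{H}^{t_{0}}(K^{*})<\infty$; your lower-bound construction already yields $\mathcal{H}^{t_{0}}(C_{u})>0$ and your covering sums are bounded (not merely tending to $0$) at $t=t_{0}$, so the stronger conclusion is implicit in what you wrote. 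The final step -- propagating $\dim_{H}C_{u}=t_{0}$ to all vertices because along any path $v\to u$ the set $C_{v}$ contains a scaled copy $f_{\gamma}(C_{u})$ -- is exactly the point where strong connectedness is used, and you have it right.
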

 If the  graph-directed construction $G$ is not strongly connected, we still have a similar result. As is well known, a directed graph $G$ must have a strongly connected component, see \cite[section 4.4.]{LM}. In which case the following theorem makes sense.
\begin{Theorem}\label{NSC}
If the $G$ in our graph-directed construction is not strongly connected, let $t_1=\max\{t_C: \Phi(t_C)=1,\, C\in SC(G)\}$, where $\Phi(t_C)$ is the largest eigenvalue of the adjacency matrix of the strongly connected subgraph $C$. Then $\dim_{H}(K^{*})=t_1$.
\end{Theorem}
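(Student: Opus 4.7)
The plan is to obtain matching lower and upper bounds for $\dim_H(K^*)$ by applying Theorem \ref{SC} to each strongly connected component separately. The lower bound comes from monotonicity, and the upper bound from decomposing every point of $K^*$ into a finite prefix followed by a tail that eventually stays in one component.

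For the lower bound, fix any strongly connected component $C \in SC(G)$, with vertex set $V_C \subseteq V$ and edge set $E_C \subseteq E$. The restriction of the original graph-directed construction to $(V_C, E_C)$ — keeping the same intervals $J_u$ and similitudes $f_{u,v}$ for $(u,v) \in E_C$ — still satisfies conditions (1)--(3), so by Theorem \ref{Existence} it produces compact sets $\{K^{(C)}_u\}_{u \in V_C}$. Every infinite path in the restricted graph is also an infinite path in $G$, so $K^{(C)}_u \subseteq C_u$ and hence $\bigcup_{u \in V_C} K^{(C)}_u \subseteq K^*$. Since the restricted graph is by construction strongly connected, Theorem \ref{SC} gives $\dim_H\bigl(\bigcup_{u \in V_C} K^{(C)}_u\bigr) = t_C$. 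Monotonicity of Hausdorff dimension thus yields $\dim_H(K^*) \geq t_C$, and maximising over $C$ gives $\dim_H(K^*) \geq t_1$.

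For the upper bound, the key combinatorial input is that the condensation of $G$ — the graph obtained by collapsing each strongly connected component to a single vertex — is a finite directed acyclic graph. Consequently, any infinite directed path $u_0 u_1 u_2 \ldots$ in $G$ visits components in a sequence that is monotone in the DAG order, and, there being only finitely many components, there exist $N \geq 0$ and $C \in SC(G)$ with $u_n \in V_C$ for all $n \geq N$. Using the identification of points in $C_{u_0}$ with infinite edge paths out of $u_0$ (the nested intersection $\{x\} = \bigcap_{n} f_{u_0,u_1} \circ \cdots \circ f_{u_{n-1},u_n}(J_{u_n})$ guaranteed by the contractivity of the $f_{u,v}$), every $x \in C_{u_0}$ can be written as $x = f_{u_0,u_1} \circ \cdots \circ f_{u_{N-1},u_N}(y)$ with $y \in K^{(C)}_{u_N}$. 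Hence
\[
K^* \subseteq \bigcup_{N \geq 0} \; \bigcup_{\text{paths } u_0 u_1 \ldots u_N} f_{u_0,u_1} \circ \cdots \circ f_{u_{N-1},u_N}\!\Bigl(\bigcup_{C \in SC(G)} \bigcup_{u \in V_C} K^{(C)}_u\Bigr),
\]
which exhibits $K^*$ as a countable union of similitude images of $\bigcup_{C \in SC(G)} \bigcup_{u \in V_C} K^{(C)}_u$. Since each piece $\bigcup_{u \in V_C} K^{(C)}_u$ has Hausdorff dimension $t_C \leq t_1$ by Theorem \ref{SC}, the countable stability and bi-Lipschitz invariance of Hausdorff dimension yield $\dim_H(K^*) \leq t_1$.

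The main obstacle is the upper-bound half. Two points need care: first, checking that the condensation of $G$ is acyclic (standard, following from the maximality in the definition of a strongly connected component), and second, the bookkeeping that lets us represent each $x \in K^*$ as a similitude image of a point in some $K^{(C)}_u$. The latter relies on the one-to-one correspondence between points of $K^*$ and infinite admissible edge paths together with the eventual trapping of such paths inside a single component. Once these are in place, the dimension estimate is an immediate appeal to Theorem \ref{SC} inside each strongly connected component.
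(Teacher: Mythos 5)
Your proposal is correct and takes essentially the same approach as the paper: the paper's own proof is a two-line sketch --- decompose $G$ into its strongly connected components, apply Theorem \ref{SC} to each, and invoke the countable stability of Hausdorff dimension --- and your argument is exactly this, with the details the paper leaves implicit (restricting the construction to a component for the lower bound; the condensation-DAG/eventual-trapping argument together with bi-Lipschitz invariance for the upper bound) now written out. Nothing in your write-up deviates from the paper's intended argument; it simply completes it.
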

\begin{proof}
 We can decompose $G$ into several subgraphs which are each strongly connected, then this theorem holds due to Theorem \ref{SC} and the countable stability of Hausdorff dimension.
\end{proof}
\subsection{Calculating the dimension of univoque set}
Now we show how to construct a graph-directed self-similar set using the subshift of finite type representation of $\widetilde{U}$ obtained in Theorem \ref{MainTheorem}. As we will see, in this case, the graph-directed self-similar set $K^{*}$ mentioned above will in fact equal $U$.

Recall the projection map $\pi:\{1,\ldots,m\}^{\mathbb{N}}\to K$ is defined by
\[\pi((i_n)_{n=1}^{\infty})=\lim\limits_{n\to \infty}f_{i_1}\circ\cdots\circ f_{i_n}(0).\]
 We use the same notation as in the proof of Theorem \ref{MainTheorem}. Let $\mathbb{F}$ be the set of finite forbidden blocks and $W=\{1,\ldots,m\}^{L}\setminus \mathbb{F}$. The set of vertices in our directed graph will be:
\begin{align*}
V=\Big\{(a_1,\ldots, a_{L-1})\in \{1,\ldots,m\}^{L-1}:&\,\mbox{there exists}\,a_L\in\{1,\ldots,m\}\, \mbox{such that}\\
& (a_1,\ldots, a_{L-1},a_{L})\in W\Big\}
\end{align*}
We now define our edges. For any two vertices $u,\,v\in V$,
$u=(u_1,\ldots, u_{L-1}),\, v=(v_1,\ldots v_{L-1})$, we draw an edge from $u$ to $v$ and label this edge $(u,v)$, if $(u_{2},\ldots, u_{L-1})=(v_1,\ldots
v_{L-2})$ and $(u_1,\ldots, u_{L-1},v_{L-1})\in W$.  Here we should note that the vertices $u,v$ which are from $V$ are blocks, while in the definition of a graph-directed construction $u$ and $v$ refer to integers.

Now we have defined our edges and hence we have constructed a directed graph $G=(V,\, E).$ If there exists a vertex $u\in V$ for which there is no $v\in V$ satisfying $(u,v)\in E,$ then we remove $u$ from our vertex set. Removing this $u$ does not change any of the latter results, so without loss of generality we may assume that for every $u\in V$ there exists $v\in V$ for which $(u,v)$ is an allowable edge. In which case we satisfy $2.$ in the above definition of a graph-directed construction.

  Before showing that we satisfy $1.$ and $3.$ in the definition of a graph-directed construction we recall an important result from \cite{LM}. We define an infinite path in our graph $G$ to be a sequence $((u^{n},v^{n}))_{n=1}^{\infty}\in E^{\mathbb{N}}$ such that  $v^{n}=u^{n+1} $ for all $n\in \mathbb{N},$ where $u^{n}=u_{1}^{n}u_{2}^{n}\cdots u_{L-1}^{n}$
Define $X_{G}$ to be
\begin{align*}
X_{G}:=\Big\{(y_{n})_{n=1}^{\infty}\in\{1,\ldots,m\}^{\mathbb{N}}:& \textrm{ there exists an infinite path } ((u^{n},v^{n}))_{n=1}^{\infty}\in E^{\mathbb{N}} \textrm{ such that } \\
&y_{n}=u^{n}_{1} \textrm { for all }n\in\mathbb{N}\Big\}.
\end{align*}

Theorem 2.3.2 of \cite{LM} states the following.

\begin{Theorem}\label{LM theorem}
Let $G$ be the directed graph as constructed above. Then $\widetilde{U}=X_{G}.$
\end{Theorem}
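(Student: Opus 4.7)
The plan is to prove $\widetilde{U} \subseteq X_{G}$ and $X_{G} \subseteq \widetilde{U}$ separately, in each case by unpacking the combinatorial definition of the graph $G$. This is essentially the standard fact that a subshift of finite type whose forbidden words all have length $L$ can be realised as the label-sequence image of a vertex shift on $(L-1)$-blocks; here we only need to verify that the specific graph $G$ constructed above is exactly this vertex shift.

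For the forward inclusion I would take $(y_{n})_{n=1}^{\infty} \in \widetilde{U}$. By Theorem \ref{MainTheorem} together with the choice of $\mathbb{F}$ in its proof, every length-$L$ subword $(y_{n},\ldots,y_{n+L-1})$ lies in $W = \{1,\ldots,m\}^{L} \setminus \mathbb{F}$. Setting $u^{n} := (y_{n}, y_{n+1}, \ldots, y_{n+L-2})$, each $u^{n}$ belongs to $V$, the overlap condition $(u^{n}_{2}, \ldots, u^{n}_{L-1}) = (u^{n+1}_{1}, \ldots, u^{n+1}_{L-2})$ is automatic, and the extended block $(u^{n}_{1}, \ldots, u^{n}_{L-1}, u^{n+1}_{L-1}) = (y_{n},\ldots,y_{n+L-1})$ lies in $W$. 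Hence $((u^{n}, u^{n+1}))_{n=1}^{\infty}$ is an infinite path in $G$ with $y_{n} = u^{n}_{1}$, exhibiting $(y_{n}) \in X_{G}$.

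For the reverse inclusion I would take $(y_{n}) \in X_{G}$ with a witnessing infinite path $((u^{n}, v^{n}))_{n=1}^{\infty}$; by definition $v^{n} = u^{n+1}$ for all $n$. The overlap condition then forces $u^{n+1}_{j} = u^{n}_{j+1}$ for $1 \leq j \leq L-2$, and a straightforward induction yields $y_{n+k} = u^{n}_{k+1}$ for $0 \leq k \leq L-2$ together with $y_{n+L-1} = u^{n+1}_{L-1}$. Therefore every length-$L$ window of $(y_{n})$ equals the block $(u^{n}_{1}, \ldots, u^{n}_{L-1}, u^{n+1}_{L-1})$, which lies in $W$ by the edge condition. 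So $(y_{n})$ contains no word from $\mathbb{F}$, and Theorem \ref{MainTheorem} places it in $\widetilde{U}$. The only care needed is in the index bookkeeping when translating the path condition (shift the vertex one letter and append) into the length-$L$ factor property of $(y_{n})$; beyond that there is no substantial obstacle, since $V$ and $E$ have been arranged precisely so that infinite paths correspond to infinite sequences all of whose $L$-blocks lie in $W$.
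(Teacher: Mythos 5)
Your proof is correct. It is worth pointing out, however, that the paper does not prove this statement at all: it quotes it as Theorem 2.3.2 of \cite{LM}, the standard fact that a shift of finite type whose forbidden words all have length $L$ is presented by the vertex shift on admissible $(L-1)$-blocks. Your argument is a self-contained verification of exactly that fact for the specific graph $G$, so it is less a different route than an unpacking of the citation: the forward inclusion reads off the path $u^{n}=(y_{n},\ldots,y_{n+L-2})$, the reverse inclusion inverts the overlap condition by induction, and both endpoints rest on the sequence-level content of the proof of Theorem \ref{MainTheorem}, namely that $(y_{n})\in\widetilde{U}$ if and only if no length-$L$ factor of $(y_{n})$ lies in $\mathbb{F}$ --- which is indeed what that proof establishes, since a factor in $\mathbb{F}$ forces some $T_{y_{1}\ldots y_{j}}(x)$ into $\mathbb{F}'\subseteq H$ (hence $x\notin U$), while avoidance of $\mathbb{F}$ keeps the orbit out of $\bigcup_{k}H_{k}$ and so forces a unique digit at every step. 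One detail you pass over silently: after defining $G$ the paper deletes vertices with no outgoing edge. This is harmless for your argument --- every vertex on the infinite path you construct has an outgoing edge to the next path vertex, which again lies on an infinite path, so all $u^{n}$ survive any iterated pruning, and removing sinks cannot change $X_{G}$, which is defined through infinite paths only --- but a sentence acknowledging it would make the match with the paper's $V$ exact. What your write-up buys is independence from \cite{LM} at this point; what the citation buys is brevity and the general statement (every $M$-step shift of finite type admits such a higher-block presentation), of which your computation is the special case needed here.
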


We define
$$K_{u}:=\Big\{x=\pi((d_n)_{n=1}^{\infty}):d_i=u_i \textrm{ for }1\leq i\leq L-1 \textrm{ and } (d_{n})_{n=1}^{\infty}\in \widetilde{U} \Big\}$$
and $$J_{u}:=\textrm{conv}(K_{u}).$$ Here $u=(u_{1},\ldots, u_{L-1})\in V$ and $\textrm{conv}(\cdot)$ denotes the convex hull.
\setcounter{tmp}{\value{Theorem}}
\setcounter{Lemma}{4}
\begin{Lemma}
\label{part 1 lemma}
Let $u,v\in V$ and $u\neq v.$ Then $J_{u}\cap J_{v}=\emptyset.$
\end{Lemma}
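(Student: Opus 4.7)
The plan is to reduce to the case where the two prefixes differ in their very first letter, then combine shift-invariance of $\widetilde{U}$ with the fact $U\cap H=\varnothing$ (established in the proof of Theorem~\ref{MainTheorem}) to separate the resulting convex hulls by a gap of at least $2\delta$. Concretely, let $\ell\in\{0,1,\ldots,L-2\}$ be the largest index with $u_i=v_i$ for all $1\le i\le\ell$, so $u_{\ell+1}\ne v_{\ell+1}$, and set $\phi=(f_{u_1}\circ\cdots\circ f_{u_\ell})^{-1}$ (with the convention $\phi=\mathrm{id}$ when $\ell=0$). Because every $f_j$ is an increasing similitude with $r_j\in(0,1)$, $\phi$ is an increasing affine bijection from the common cylinder $f_{u_1}\circ\cdots\circ f_{u_\ell}(K)$ onto $K$, so it commutes with taking convex hulls. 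Hence it suffices to prove that $\mathrm{conv}(\phi(K_u))$ and $\mathrm{conv}(\phi(K_v))$ are disjoint subsets of $K$.

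I would first observe that $\widetilde{U}$ is shift-invariant: if $(d_n)\in\widetilde{U}$ and $\sigma((d_n))$ admitted a second coding of its projection, prepending $d_1$ would yield a second coding of $\pi((d_n))$, contradicting uniqueness. Using shift-invariance together with the definition of $K_u$ gives
\[\phi(K_u)\subseteq f_{u_{\ell+1}}(K)\cap U,\qquad \phi(K_v)\subseteq f_{v_{\ell+1}}(K)\cap U.\]
If $f_{u_{\ell+1}}(K)\cap f_{v_{\ell+1}}(K)=\varnothing$ we are done immediately; otherwise, by the standing assumption on the switch regions, this intersection is a nontrivial interval $H_k=[a_k,b_k]$. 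The one delicate step, which I expect to be the main obstacle, is ruling out strict nesting of the two first-level intervals: if for example $f_{v_{\ell+1}}(K)\subsetneq f_{u_{\ell+1}}(K)$, then $f_{v_{\ell+1}}(K)$ would coincide with $H_k$, hence lie inside $H\subseteq K\setminus U$, forcing $K_v=\varnothing$. However, the graph $G$ was constructed so that every vertex has at least one outgoing edge, so an infinite path starting at $v$ exists and, by Theorem~\ref{LM theorem}, projects to a point of $K_v$. Thus $K_v\ne\varnothing$, nesting is impossible, and the two first-level intervals meet only at the boundary switch region $H_k$. Without loss of generality $f_{u_{\ell+1}}(K)$ lies to the left of $f_{v_{\ell+1}}(K)$.

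At this point the $\delta$-enlargement finishes the argument. Since $[a_k-\delta,b_k+\delta]\subseteq H\subseteq K\setminus U$, we have
\[\phi(K_u)\subseteq f_{u_{\ell+1}}(K)\setminus[a_k-\delta,b_k+\delta]\subseteq(-\infty,a_k-\delta)\]
and symmetrically $\phi(K_v)\subseteq(b_k+\delta,+\infty)$. Taking convex hulls preserves this $2\delta$-gap, so $\mathrm{conv}(\phi(K_u))\cap\mathrm{conv}(\phi(K_v))=\varnothing$, and applying the increasing affine bijection $\phi^{-1}$ yields $J_u\cap J_v=\varnothing$, as required.
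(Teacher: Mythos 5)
Your argument is correct, but it follows a genuinely different route from the paper's. The paper works directly with the hulls: writing $J_u=[c,d]$, $J_v=[e,f]$ and noting by compactness that the endpoints lie in $K_u$, $K_v$, it splits into two cases. If the hulls meet in a single point, that point is a common endpoint, hence a univoque point lying in $K_u\cap K_v$ with two distinct length-$(L-1)$ prefixes --- impossible. If they overlap in an interval, say $c<e<d$, then since $T_{u_1\cdots u_{L-1}}$ maps both $c$ and $d$ into the interval $K$, monotonicity forces $T_{u_1\cdots u_{L-1}}(e)\in K$ as well, contradicting (via Proposition~\ref{non unique prop}) the uniqueness of the prefix map for the univoque point $e\in K_v$. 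This is a purely order-theoretic argument: no switch regions, no $\delta$-enlargement, no graph structure, and it is valid vacuously even if some $K_v$ is empty. Your proof instead strips the longest common prefix using shift-invariance of $\widetilde{U}$ (which you correctly verify), reduces to distinct first letters, and then separates the stripped sets using $U\cap H=\varnothing$ from the proof of Theorem~\ref{MainTheorem}; this buys a quantitative conclusion --- after applying $\phi^{-1}$, the hulls $J_u$, $J_v$ are separated by a gap of at least $2\delta\, r_{u_1}\cdots r_{u_\ell}$, not merely disjoint --- at the cost of importing more machinery. Two of your steps can be streamlined. First, the delicate nesting exclusion via outgoing edges and Theorem~\ref{LM theorem} is dispensable: if nesting forces $K_v=\varnothing$, then $J_v=\mathrm{conv}(\varnothing)=\varnothing$ and the lemma holds trivially for that pair, so no contradiction is needed (your appeal to the pruned graph is nonetheless legitimate, since Theorem~\ref{LM theorem} precedes the lemma). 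Second, the $\delta$-enlargement is overkill for bare disjointness: every point of the closed switch region $H_k=[a_k,b_k]$ is mapped into $K$ by two distinct maps, so $U\cap H_k=\varnothing$ unconditionally by Proposition~\ref{non unique prop}, and the open gap $\phi(K_u)\subseteq[A,a_k)$, $\phi(K_v)\subseteq(b_k,B]$ already separates the convex hulls, without invoking the hypothesis of Theorem~\ref{MainTheorem} at this point. One boundary case you gloss over --- $f_{u_{\ell+1}}(K)=f_{v_{\ell+1}}(K)$, which is neither strict nesting nor a left-right configuration --- is harmless: equality of the images forces $f_{u_{\ell+1}}=f_{v_{\ell+1}}$, a degenerate IFS with $U=\varnothing$, and in any case your containment argument applies verbatim to yield $K_u=K_v=\varnothing$.
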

\begin{proof}
$J_{u}$ and $J_{v}$ are the convex hulls of $K_u$ and $K_v$ respectively, as such they are both intervals. We assume that $J_{u}=[c,d]$ and $J_{v}=[e,f]$. As $K_{u}$ is compact, the endpoints of $J_{u}$ are elements of $K_u$. Similarly, $e,f\in K_v$. Now we prove that $[c,d] \cap [e,f]=\varnothing$.

If $[c,d]$ and $[e,f]$ intersect in a point then this point must be an endpoint.  Without loss of generality assume $d=e$, then $d\in K_u \cap K_v$. However, $K_{u}\subset U$ and we have a contradiction as $u\neq v$.

Now let us assume $J_{u}$ and $J_{v}$ intersect in an interval. Without loss of generality, we assume that $c<e<d$. Since $e$ is a univoque point in $K_v$, we know by Proposition \ref{non unique prop} that there exists a unique sequence of $T_{j}$'s of length $L-1$ that map $e$ into $K$. As $e\in K_{v}$ this sequence of transformations must be $T_{v_{1}\cdots v_{L-1}}.$ By our assumption $c<e<d,$ therefore by the monotonicity of the maps $T_{j}$ we have that $T_{u_{1}\cdots u_{L-1}}(c)<T_{u_{1}\cdots u_{L-1}}(e)< T_{u_{1}\cdots u_{L-1}}(d)$. Both $T_{u_{1}\cdots u_{L-1}}(c),T_{u_{1}\cdots u_{L-1}}(d)\in K,$ but as $K$ is an interval this implies $T_{u_{1}\cdots u_{L-1}}(e)\in K,$ a contradiction.

\end{proof}
By Lemma \ref{part 1 lemma} we can take $\{J_{u}\}_{u\in V}$ to be the set of bounded closed intervals required in $1.$ of the definition of a graph-directed construction.

It remains to show that we satisfy $3.$ of the definition of a graph-directed construction. First of all we define our similitudes, given an edge $(u,v)\in E$ we define $f_{uv}(x)=r_{u_1}x+a_{u_1}.$ The following lemma proves that we satisfy $3.$  of the  graph-directed construction.
\begin{Lemma}\label{part 2 lemma}
 Fix $u\in V.$ Then $\bigcup\limits_{(u,v)\in E}f_{uv}(J_v)\subseteq  J_u$
 and $f_{uv}(J_v) \cap f_{uv^{'}}(J_{v^{'}})=\varnothing$,
for all distinct pairs of edges.
\end{Lemma}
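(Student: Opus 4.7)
The plan is to handle the two assertions separately. The containment $f_{uv}(J_v)\subseteq J_u$ is the substantive claim; the disjointness will follow almost for free from Lemma \ref{part 1 lemma} once one observes that every similitude attached to an edge leaving $u$ depends only on $u_1$.

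For the containment, I would first reduce to a statement at the symbolic level. Pick any $y\in K_v$ and write $y=\pi((d_n)_{n=1}^{\infty})$ with $(d_n)_{n=1}^{\infty}\in\widetilde{U}$ and $d_i=v_i$ for $1\le i\le L-1$. Then $f_{uv}(y)=f_{u_1}(y)=\pi((u_1,d_1,d_2,\ldots))$, so the task is to show $(u_1,d_1,d_2,\ldots)\in\widetilde{U}$. By Theorem \ref{LM theorem} this is equivalent to exhibiting an infinite path in $G$ whose first coordinates realise the sequence. The first vertex of such a path would be $(u_1,d_1,\ldots,d_{L-2})=(u_1,v_1,\ldots,v_{L-2})$, which equals $u=(u_1,\ldots,u_{L-1})$ because the edge condition $(u,v)\in E$ forces $(u_2,\ldots,u_{L-1})=(v_1,\ldots,v_{L-2})$. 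The edge from this vertex to the next one $(d_1,\ldots,d_{L-1})=v$ requires the word $(u_1,u_2,\ldots,u_{L-1},v_{L-1})$ to lie in $W$, which is the other half of the definition of $(u,v)\in E$. All later edges are inherited directly from the infinite path associated to $(d_n)_{n=1}^{\infty}\in\widetilde{U}$. Hence $f_{u_1}(K_v)\subseteq K_u$, and because $f_{u_1}$ is affine and $J_v=\textrm{conv}(K_v)$, I conclude $f_{u_1}(J_v)=\textrm{conv}(f_{u_1}(K_v))\subseteq \textrm{conv}(K_u)=J_u$.

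For the disjointness, the key point is that for any edge $(u,v)\in E$ the similitude $f_{uv}(x)=r_{u_1}x+a_{u_1}$ depends only on $u_1$, not on $v$. Therefore, if $(u,v)$ and $(u,v')$ are distinct edges out of $u$, they share the same similitude $f_{u_1}$, which is an injective affine map. Thus
\[
f_{uv}(J_v)\cap f_{uv'}(J_{v'})=f_{u_1}(J_v)\cap f_{u_1}(J_{v'})=f_{u_1}(J_v\cap J_{v'}),
\]
and Lemma \ref{part 1 lemma} applied to $v\neq v'$ gives $J_v\cap J_{v'}=\emptyset$, so the images are disjoint.

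The only real obstacle is bookkeeping: one must line up the prefix of the prepended sequence with the vertex $u$ and verify that both defining conditions of an edge in $E$ are used — one to identify the starting vertex, and the other to license the very first edge of the new path. Once that identification is made precise, the rest of the argument is mechanical, and the disjointness is an immediate consequence of the previously established Lemma \ref{part 1 lemma}.
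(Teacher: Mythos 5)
Your proof is correct and follows essentially the same route as the paper's: you prepend $u_1$ to the unique coding of $y\in K_v$, use both halves of the edge condition to realise the new sequence as an infinite path in $G$, invoke Theorem \ref{LM theorem} in both directions, and obtain disjointness from Lemma \ref{part 1 lemma} together with the injectivity of the shared similitude $f_{u_1}$. Your explicit convex-hull step $f_{u_1}(J_v)=\textrm{conv}\left(f_{u_1}(K_v)\right)\subseteq\textrm{conv}(K_u)=J_u$ merely spells out the reduction the paper states implicitly when it says it suffices to prove $\bigcup_{(u,v)\in E}f_{uv}(K_v)\subseteq K_u$.
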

\begin{proof}
 For the first statement, it is sufficient to prove $\bigcup\limits_{(u,v)\in E}f_{uv}(K_v)\subseteq K_u$.
 Suppose $(u,v)\in E$ and $x=f_{uv}(y)$ where $y\in K_{v}$. Let $(y_{n})_{n=1}^{\infty}\in \widetilde{U}$ be the unique coding of $y.$ By Theorem \ref{LM theorem} we know that $(y_{n})_{n=1}^{\infty}\in X_{G}.$ Let $(x_{n})_{n=1}^{\infty}$ be such that $x_{1}=u_{1}$ and $x_{i}=y_{i-1}$ for $i\geq 2,$ then $(x_{n})_{n=1}^{\infty}$ is a coding of $x.$ Since $(u,v)\in E$ we have that $(u_{2},\ldots, u_{L-1})=(v_{1},\ldots, v_{L-2}).$ Moreover, as $(u,v)\in E$ and $(y_{n})_{n=1}^{\infty}\in X_{G}$ then $(x_{n})_{n=1}^{\infty}\in X_{G}$. Using Theorem
\ref{LM theorem} again we know that $(x_{n})_{n=1}^{\infty}\in \widetilde{U},$ which combined with the observation $(x_{1},\ldots, x_{L-1})=(u_{1},\ldots, u_{L-1})$ implies $x\in K_{u}.$   

 The second statement is an immediate consequence of Lemma \ref{part 1 lemma} and the fact that our similitudes are bijections from $\mathbb{R}$ to $\mathbb{R}$ that do not depend on $v$.

\end{proof}
We have satisfied all of the criteria for a graph-directed construction and may therefore conclude that Theorem \ref{Existence} holds. We now show that for our graph construction $K^{*}=U.$ We begin by showing that the $K_{u}$'s are precisely the $C_{u}$'s in Theorem \ref{Existence}.
\begin{Lemma}\label{Graph}
For each $u\in V$ we have $K_u=\bigcup\limits_{(u,v)\in E}f_{uv}(K_v)$.
\end{Lemma}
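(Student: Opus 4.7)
The plan is to prove the lemma by establishing both set inclusions separately. The inclusion $\bigcup_{(u,v)\in E} f_{uv}(K_v) \subseteq K_u$ has essentially been verified already as the first statement of Lemma \ref{part 2 lemma}, so I would simply invoke that result.

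For the reverse inclusion $K_u \subseteq \bigcup_{(u,v)\in E} f_{uv}(K_v)$, I would start with an arbitrary $x \in K_u$, so that $x = \pi((d_n)_{n=1}^\infty)$ for the unique coding $(d_n) \in \widetilde{U}$ with $(d_1, \ldots, d_{L-1}) = (u_1,\ldots,u_{L-1})$. The natural candidate decomposition is $x = f_{d_1}(y)$ where $y := \pi(\sigma((d_n)_{n=1}^\infty))$, together with the candidate vertex $v := (d_2, \ldots, d_L)$. Since $f_{uv}$ is defined as $f_{u_1} = f_{d_1}$, we get $x = f_{uv}(y)$ for free. The remaining work is to verify three things: that $v \in V$, that $(u,v) \in E$, and that $y \in K_v$.

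Two of these are essentially bookkeeping. The edge condition $(u_2, \ldots, u_{L-1}) = (v_1, \ldots, v_{L-2})$ holds because both tuples equal $(d_2, \ldots, d_{L-1})$, and the block condition $(u_1, \ldots, u_{L-1}, v_{L-1}) = (d_1, \ldots, d_L) \in W$ follows because $(d_n) \in \widetilde{U} = X_G$ by Theorem \ref{LM theorem}, which forces every length-$L$ subword of $(d_n)$ to lie in $W$. For $y \in K_v$, I would note that $\sigma((d_n))$ is a coding of $y$ lying in $\widetilde{U}$ (by shift-invariance of the subshift of finite type $\widetilde U$) and that its first $L-1$ symbols are exactly $v_1,\ldots,v_{L-1}$; uniqueness of the coding of $y$ then identifies this with the distinguished coding used in the definition of $K_v$.

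The only step that is not pure bookkeeping, and which I expect to be the main point to pin down, is verifying $v \in V$. This requires producing some $a_L \in \{1,\ldots,m\}$ with $(v_1, \ldots, v_{L-1}, a_L) \in W$. The natural choice is $a_L = d_{L+1}$, and this works precisely because the shifted sequence $\sigma((d_n))$ again lies in $\widetilde U = X_G$, so the length-$L$ block $(d_2, \ldots, d_{L+1})$ avoids $\mathbb F$. At its core, this is the shift-invariance of $\widetilde U$, which is guaranteed by its subshift-of-finite-type description from Theorem \ref{MainTheorem}. Once this is in place, no new ingredients are needed.
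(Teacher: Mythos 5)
Your proof is correct and takes essentially the same route as the paper's: both decompose $x=f_{u_1}(y)$ with $y=\pi(\sigma((d_n)_{n=1}^{\infty}))$ and $v=(d_2,\ldots,d_L)$, deduce $v\in V$, $(u,v)\in E$ and $y\in K_v$ from $\widetilde{U}=X_G$ (Theorem \ref{LM theorem}) together with shift-invariance of $\widetilde{U}$, and invoke Lemma \ref{part 2 lemma} for the reverse inclusion. The only difference is presentational: you explicitly unpack the $W$-membership checks (e.g.\ choosing $a_L=d_{L+1}$ to witness $v\in V$) that the paper reads off directly from the infinite-path description of $X_G$.
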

\begin{proof}
Let $x\in K_{u}$ and $(x_{n})_{n=1}^{\infty}$ be the unique coding for $x$. Then
 $x_n=u_n$ for $1\leq n \leq L-1$. Let \[v=(v_1,\ldots,
v_{L-1})=(x_2,\ldots, x_{L})=(u_2,\ldots, u_{L-1},x_{L}).\] By Theorem \ref{LM theorem} we have $(x_{n})_{n=1}^{\infty}\in X_{G}.$ Therefore $v\in V$ and  $(u,v)\in E$.
Let $y\in K$ have coding $(x_{n+1})_{n=1}^{\infty},$ $(x_{n+1})_{n=1}^{\infty}\in X_{G}$ and by Theorem \ref{LM theorem} we know that $(x_{n+1})_{n=1}^{\infty}\in \widetilde{U}.$ As $(x_{2},\ldots,x_{L})=(v_{1},\ldots, v_{L-1})$ we can deduce that $y\in K_{v}$. As $f_{uv}(y)=x$ we have shown that $K_u\subseteq\bigcup\limits_{(u,v)\in E}f_{uv}(K_v)$. The inverse inclusion is proved in Lemma \ref{part 2 lemma}.
\end{proof}

By the uniqueness part of Theorem \ref{Existence} we may conclude from Lemma \ref{Graph} that the set $\cup_{u=1}^{n}C_{u}$ in the statement equals $\cup_{u\in V}K_{u}.$ The fact that $U=\cup_{u\in V} K_{u}$ is immediate from the definition of $K_{u}.$ As such $U=K^{*}$ and is the graph directed self-similar set for our construction. Therefore, Theorem \ref{SC} and Theorem \ref{NSC} apply and we use them to calculate the Hausdorff dimension of $U$. We include an explicit calculation in section 6.

Now we give a final remark to finish this section.
In \cite{DL} Kong and Li proved the following interesting result.
\setcounter{tmp}{\value{Lemma}}
\setcounter{Theorem}{7}
\begin{Theorem}\label{KongLi}
There exists intervals for which the function mapping $\beta$ to the Hausdorff dimension of the univoque set is strictly decreasing.
\end{Theorem}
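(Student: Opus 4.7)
The plan is to exploit the graph-directed self-similar characterisation of $U_\beta$ developed in Section 5, together with the homogeneity peculiar to $\beta$-expansions, to exhibit an explicit interval on which $\beta \mapsto \dim_H(U_\beta)$ is strictly decreasing. The key observation is that for $\beta$-expansions every contraction $g_j$ has the same ratio $\beta^{-1}$, so the weighted adjacency matrix of the graph-directed construction of Section 5 takes the simple form $A^t = \beta^{-t} M$, where $M$ is the $0$-$1$ transition matrix of the subshift of finite type. If $M$ is constant across an interval of $\beta$-values, then Theorem \ref{SC} (or Theorem \ref{NSC} if the graph fails to be strongly connected) yields
\[
\dim_H(U_\beta) = \frac{\log \lambda(M)}{\log \beta},
\]
where $\lambda(M)$ denotes the spectral radius of $M$. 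As soon as $\lambda(M) > 1$ the right-hand side is strictly decreasing in $\beta$, and the theorem follows.

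To carry out this plan, first I would choose a base $\beta_0 > 1$ satisfying the hypothesis of Theorem \ref{MainTheorem1}, so that $\widetilde{U}_{\beta_0}$ is described by the truncated lexicographic inequalities in Theorem \ref{MainTheorem1} with some pair $(M,p)$ and constant blocks $(\alpha_1,\ldots,\alpha_p)$ and $(\varepsilon_1,\ldots,\varepsilon_p)$ read from the greedy expansion of $1$ in base $\beta_0$. Corollary \ref{beta corollary} guarantees that such $\beta_0$ form a set of full measure, so many candidates are available, and one should pick $\beta_0$ for which $\dim_H(U_{\beta_0})$ is already known to be positive (so that $\lambda(M) > 1$). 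Next, I would show that there exists $\varepsilon > 0$ such that for every $\beta \in [\beta_0, \beta_0 + \varepsilon)$ the greedy expansion of $1$ in base $\beta$ shares the same initial block of length $p$ as in base $\beta_0$, and the strict inequalities in Theorem \ref{MainTheorem1} continue to hold with the same $M$ and $p$. This step uses only the right-continuity of $\beta \mapsto (\alpha_n(\beta))_{n=1}^{\infty}$ in the product topology.

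Once such an interval $[\beta_0, \beta_0+\varepsilon)$ is fixed, the defining inequalities in Theorem \ref{MainTheorem1} are identical for every $\beta$ in the interval; consequently the set of forbidden blocks $\mathbb{F}$ of length $L$ from Section 5, the vertex set $V$, and the edge set $E$ of the graph $G$ are all independent of $\beta$. The graph-directed construction therefore produces $U_\beta$ as the attractor of a graph with the same combinatorial structure for every such $\beta$, but with all edge ratios equal to $\beta^{-1}$. Substituting $A^t = \beta^{-t} M$ into Theorem \ref{SC} or Theorem \ref{NSC} gives the displayed dimension formula, and strict monotonicity is immediate.

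The main obstacle I anticipate is the second step: one must rigorously verify that the hypothesis of Theorem \ref{MainTheorem1} (with the \emph{same} constants $M$ and $p$) genuinely persists on a nondegenerate interval. Because the greedy expansion of $1$ is only right-continuous in $\beta$, one typically obtains a half-open interval, and one must make sure that small perturbations of $\beta$ neither destroy the strict inequality $(\varepsilon_{M+n})_{n=1}^{\infty} > (\alpha_n)_{n=1}^{\infty}$ nor alter the relevant prefixes $(\alpha_1,\ldots,\alpha_p)$ and $(\varepsilon_1,\ldots,\varepsilon_p)$. A careful quantitative estimate, controlling how $(\alpha_n(\beta))$ varies with $\beta$ on compact pieces of the Parry parameter space, should suffice; this is the only delicate part of the argument, since everything else is a direct application of the framework already established in Sections 4 and 5.
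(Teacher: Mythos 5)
Your proposal is correct and follows essentially the same route as the paper's own (outlined) argument: homogeneity of the $\beta$-expansion IFS gives $\dim_{H}(U_{\beta})=\log\lambda/\log\beta$ via Theorems \ref{SC} and \ref{NSC}, the transition matrix (hence $\lambda$) is shown to be locally constant in $\beta$, and strict decrease follows once $\lambda>1$. The only minor difference is that you verify local constancy of the forbidden words symbolically through Theorem \ref{MainTheorem1} and right-continuity of the greedy expansion of $1$ (yielding a half-open interval, which suffices), whereas the paper invokes stability of the switch-region condition from the proof of Theorem \ref{MainTheorem}; the paper itself states these hypotheses are equivalent, and, like the paper, you leave the same ``delicate'' persistence step at the level of a sketch.
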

This result is somewhat counterintuitive. As $\beta$ gets the larger, the corresponding switch regions shrink. As such, one might expect that the set of points whose orbits avoid the switch regions, i.e. the univoque set, would be larger. However, Theorem \ref{KongLi} shows that in terms of Hausdorff dimension this is not always the case.

A similar idea to the proof of Theorem \ref{MainTheorem} allows us to recover Theorem \ref{KongLi} quickly. We only give an outline of this argument. A straightforward manipulation of the formulas given in Theorem \ref{SC} and Theorem \ref{NSC}, yields that $\dim_{H}(U_{\beta})=\dfrac{\log \lambda}{\log\beta}$, where $\lambda$ is the largest eigenvalue of the transition matrix defining our subshift of finite type. Using similar ideas to those given in the proof of Theorem \ref{MainTheorem}, we can show that if $\beta$ satisfies the hypothesis of Theorem \ref{MainTheorem}, then the hypothesis is also satisfied for $\beta'$ sufficiently close to $\beta.$ Moreover, a more delicate argument implies that  for $\beta'$ sufficiently close to $\beta,$ the set of forbidden words for $\beta'$ equals the set of forbidden words for $\beta$. In other words, the subshift of finite type defining the univoque set for $\beta'$ equals the subshift of finite type defining the univoque set for $\beta$. Observing that $\dim_{H}(U_{\beta})$ is decreasing on some sufficiently small interval containing $\beta$ now follows from the formula stated above.

\section{Example}
In this section, we give an example to show how to calculate the dimension of a univoque set.
\begin{Example}
Let $[0,\frac{1}{\beta-1}]$ be the self-similar set  with IFS:
$\{f_{0}(x),\,f_{1}(x)\}$ where
\[
f_{0}(x)=\dfrac{x}{\beta},\,f_{1}(x)=\dfrac{x+1}{\beta}
\]
Let  $\beta^{*}$ be the unique $\beta\in(1,2)$ satisfing the equation $(111(00001)^{\infty})_{\beta}=1$. In this case $\beta^{*}\approx 1.84$. We now calculate $\dim_{H}(U_{\beta^{*}})$.
\end{Example}
The greedy expansion of $1$ in this base is $(\alpha_n)_{n=1}^{\infty}=(111(00001)^{\infty})$, we observe that  $(\varepsilon_4,\varepsilon_5,\varepsilon_6,\varepsilon_7)>(\alpha_1,\alpha_2,\alpha_3,\alpha_4)$. In which case, by Theorem \ref{MainTheorem1} we have that $\widetilde{U}$ is given by a subshift of finite type. Moreover, in the statement of Theorem \ref{MainTheorem1} we can take $p=7$. We now construct the relevant directed graph. In this case our set $W$ is:
\[W=\{(a_{1},\ldots
,a_{7}):(0001111)<(a_{1},\ldots, a_{7})<(1110000)\},\] moreover the set of vertices equals
\[V=\{(a_{1},\ldots
,a_{6}):(000111)<(a_{1},\ldots, a_{6})<(111000)\}.\]
We now construct the edge set in accordance with the construction given in section 5.2. In total there are 26 vertices:
\[v_1=(001001)\,
v_{2}=(001010)\,
v_{3}=(001011)\,
v_{4}=(001100)\,
v_{5}=(001101)\,\]
\[v_{6}=(010010)\,
v_{7}=(010011)\,
v_{8}=(010100)\,
v_{9}=(010101)\,
v_{10}=(010110)\,\]
\[v_{11}=(011001)\,
v_{12}=(011010)\,
v_{13}=(011011)\,
v_{14}=(100100)\,
v_{15}=(100101)\,\]
\[v_{16}=(100110)\,
v_{17}=(101001)\,
v_{18}=(101010)\,
v_{19}=(101011)\,
v_{20}=(101100)\,\]
\[v_{21}=(101101)\,
v_{22}=(110010)\,
v_{23}=(110011)\,
v_{24}=(110100)\,
v_{25}=(110101)\,\]
\[v_{26}=(110110).\]
We now follow Mauldin and William approach and construct a $26\times 26$ matrix $(A_{i,j}),$ where $A_{i,j}=\dfrac{1}{(\beta^{*})^{t}}$ if there is an edge from vertex $v_i$ to $v_j$, otherwise, $A_{i,j}=0$.
A computer calculation then yields $\dim_{H}(U_{\beta^{*}})\approx0.79$.

\section{Final remark}
We mentioned in Remark \ref{Highdimension} that the main idea of Theorem \ref{MainTheorem} is still effective in higher dimensions. To conclude we give a brief outline of the argument required. First of all assume that our attractor $K\subset \mathbb{R}^{d}$ is some sufficiently nice set, i.e. a rectangle, cube, polyhedra. In which case the switch regions are also nice sets. We assume that every point on the boundary of the switch regions is mapped into the interior of a switch region. An analogue of Corollary \ref{Universal coding} holds in higher dimensions, as such we expect this assumption to hold generically.  As a consequence of this construction we can enlarge the switch region and not change the univoque set. A similar argument to that given in the proof of Theorem \ref{MainTheorem} means that if we enlarge the switch region in a very careful manner, the set of points that never map into the switch region are precisely those whose codings avoid a finite set of forbidden words. In which case the set of codings of univoque points is a subshift of finite type.

\section*{Acknowledgements}
 The first author was supported by the Dutch Organisation for Scientifitic Research(NWO) grant number 613.001.002. The third author was supported by China Scholarship Council grant number 20123013.

\bibliographystyle{plain}
\bibliography{Uniquecoding.bib}

\begin{thebibliography}{10}

\bibitem{BS}
Simon Baker.
\newblock On the cardinality and complexity of the set of codings for
  self-similar sets with positive {L}ebesgue measure.
\newblock {\em preprint}.

\bibitem{STS}
Stefan Bundfuss, Tyll Kr{\"u}ger, and Serge Troubetzkoy.
\newblock Topological and symbolic dynamics for hyperbolic systems with holes.
\newblock {\em Ergodic Theory Dynam. Systems}, 31(5):1305--1323, 2011.

\bibitem{KarmaCor}
Karma Dajani and Cor Kraaikamp.
\newblock Random {$\beta$}-expansions.
\newblock {\em Ergodic Theory Dynam. Systems}, 23(2):461--479, 2003.

\bibitem{DKK}
Zolt{\'a}n Dar{\'o}czy and Imre K{\'a}tai.
\newblock On the structure of univoque numbers.
\newblock {\em Publ. Math. Debrecen}, 46(3-4):385--408, 1995.

\bibitem{MK}
Martijn de~Vries and Vilmos Komornik.
\newblock Unique expansions of real numbers.
\newblock {\em Adv. Math.}, 221(2):390--427, 2009.

\bibitem{EHJ}
P.~Erd{\H{o}}s, M.~Horv{\'a}th, and I.~Jo{\'o}.
\newblock On the uniqueness of the expansions {$1=\sum q^{-n_i}$}.
\newblock {\em Acta Math. Hungar.}, 58(3-4):333--342, 1991.

\bibitem{FG}
Kenneth Falconer.
\newblock {\em Fractal geometry}.
\newblock John Wiley \& Sons, Ltd., Chichester, 1990.
\newblock Mathematical foundations and applications.

\bibitem{GS}
Paul Glendinning and Nikita Sidorov.
\newblock Unique representations of real numbers in non-integer bases.
\newblock {\em Math. Res. Lett.}, 8(4):535--543, 2001.

\bibitem{H}
John~E. Hutchinson.
\newblock Fractals and self-similarity.
\newblock {\em Indiana Univ. Math. J.}, 30(5):713--747, 1981.

\bibitem{K1}
G{\'a}bor Kall{\'o}s.
\newblock The structure of the univoque set in the small case.
\newblock {\em Publ. Math. Debrecen}, 54(1-2):153--164, 1999.

\bibitem{K2}
G{\'a}bor Kall{\'o}s.
\newblock The structure of the univoque set in the big case.
\newblock {\em Publ. Math. Debrecen}, 59(3-4):471--489, 2001.

\bibitem{DL}
Derong Kong and Wenxia Li.
\newblock On the {H}ausdorff dimension of unique beta expansons.
\newblock {\em preprint}.

\bibitem{KLD}
Derong Kong, Wenxia Li, and F.~Michel Dekking.
\newblock Intersections of homogeneous {C}antor sets and beta-expansions.
\newblock {\em Nonlinearity}, 23(11):2815--2834, 2010.

\bibitem{LM}
Douglas Lind and Brian Marcus.
\newblock {\em An introduction to symbolic dynamics and coding}.
\newblock Cambridge University Press, Cambridge, 1995.

\bibitem{MW}
R.~Daniel Mauldin and S.~C. Williams.
\newblock Hausdorff dimension in graph directed constructions.
\newblock {\em Trans. Amer. Math. Soc.}, 309(2):811--829, 1988.

\bibitem{P}
W.~Parry.
\newblock On the {$\beta $}-expansions of real numbers.
\newblock {\em Acta Math. Acad. Sci. Hungar.}, 11:401--416, 1960.

\bibitem{R}
A.~R{\'e}nyi.
\newblock Representations for real numbers and their ergodic properties.
\newblock {\em Acta Math. Acad. Sci. Hungar}, 8:477--493, 1957.

\bibitem{Schmeling}
J{\"o}rg Schmeling.
\newblock Symbolic dynamics for {$\beta$}-shifts and self-normal numbers.
\newblock {\em Ergodic Theory Dynam. Systems}, 17(3):675--694, 1997.

\bibitem{ZL}
Yuru Zou, Jian Lu, and Wenxia Li.
\newblock Unique expansion of points of a class of self-similar sets with
  overlaps.
\newblock {\em Mathematika}, 58(2):371--388, 2012.

\end{thebibliography}

\end{document}